\numberwithin{equation}{section}
\title{Classification of positive solutions to a class of Laplace equations with a gradient term}
\author{Jingbo Dou \and  Benfeng Shi \and Tian Wu \and Hua Zhu\footnotemark[1]}
\begin{document}

\date{}
\maketitle

\renewcommand{\thefootnote}{\fnsymbol{footnote}}
\newcommand{\Red}[1]{\textcolor{red}{#1}}
\newcommand{\Blue}[1]{\textcolor{blue}{#1}}
\newcommand{\Green}[1]{\textcolor{green}{#1}}

\footnotetext[1]{The corresponding author.}

\begin{abstract}

In this paper, we investigate positive solutions to a class of Laplace equations with a gradient term on a complete, connected, and noncompact Riemannian manifold \((M^n,g)\) with nonnegative Ricci curvature, namely
\[-\Delta u = f(u)|\nabla u|^q\quad\text{in }~M^n,\]
where \(n\geqslant 3\), \(q>0,\) and \(f\) is a positive continuous function. We prove some Liouville theorems employing a key differential identity derived via the invariant tensor technique. In particular, for \(f(u)=u^{\frac{2-q}{n-2}(n+\frac{q}{1-q})-1}\) is the second critical case in dimension \(n=3,4,5\), without any additional conditions, such as integrable conditions on \(u\), we show the rigidity for the ambient manifold and classification result of positive solutions. To our knowledge, this is the first rigidity result for equations with gradient terms in the second critical case. Moreover, this result confirms that all solutions must be of the form found in \cite{BV-GH-V2019}.
\keywords{semilinear elliptic equations, Liouville theorem, critical exponent, the invariant tensor technique.}\\
\textbf{2020 Mathematics Subject Classification:} 35J91, 35B08,  35B53.
\end{abstract}

\tableofcontents
\section{Introduction}\label{sec-introduction}
Let \((M^n, g)\) be a complete, connected, and noncompact Riemannian manifold of dimension \(n\geqslant3\) with nonnegative Ricci curvature. 
Consider a class of semilinear equations with a gradient term 
\begin{equation}\label{Meq-gradient}
-\Delta u = f(u)|\nabla u|^q\quad\text{in }M^n,
\end{equation}
where \(\Delta\) is the Laplace-Beltrami operator, \(\nabla\) is the gradient operator, and \(f:\mathbb R_+\to\mathbb R_+\) is continuous.
The equation \eqref{Meq-gradient} has a strong physical background. Such as a qualitative mathematical model studying the groundwater flow in a water-absorbing fissured porous rock in one spatial dimension (see e.g., \cite{BBCP2000} ), the porous media equation (see e.g., \cite{P2018}).

When \(f=0\), equation \eqref{Meq-gradient} becomes \(\Delta u=0\). Cheng-Yau \cite{CY1975} proved that any positive or bounded solution from above or below to \eqref{Meq-gradient} is a constant on Riemannian manifolds with nonnegative Ricci curvature. 

For \(q=0\), the equation \eqref{Meq-gradient} becomes
\begin{equation}\label{Meq-q-0}
-\Delta u =f(u)\quad\text{in }M^n.   
\end{equation}
For more general nonlinearity \(f(x,u)\), with additional assumptions imposed on \(f\), Gidas-Spruck \cite[Theorem 6.1]{GS1981} proved that solutions of equation \eqref{Meq-q-0} must be constant. In the Euclidean case, Serrin-Zou \cite{SZ2002} studied the \(m\)-Laplacian case. By focusing on \(m=2\), they proposed the subcritical condition on \(f\), that is
\[f(u)\geqslant0,\quad (p-1)f(u)-uf'(u)\geqslant0\]
for \(u>0, 1<p<\frac{2n}{n-2}\). Under the subcritical condition on \(f\), they obtained a Liouville theorem of equation \eqref{Meq-q-0} in \(\mathbb R^n\), that is if one of following condition holds:

(i) If \(n=2, 3\) and \(f\) is subcritical,
 
(ii) If \(n\geqslant 4\), \(f(u)\geqslant Cu\) for sufficiently large \(u\), and \(f\) is subcritical,

\noindent then equation \eqref{Meq-q-0} admits only constant solutions. For more Liouville theorems involving equation \eqref{Meq-q-0}, see \cite{L2025, L2025-1, LZ2003, MW2024, Z2025} and the references therein.

A typical case for equation \eqref{Meq-q-0} is
\begin{equation}\label{Meq-q-0-ty}
-\Delta u =u^p\quad\text{in }M^n.
\end{equation}
This equation is the famous Lane-Emden equation, which is related to the Yamabe problem. Gidas-Spruck \cite{GS1981} used differential identities to derive a Liouville theorem, namely, there is no positive solution when \(1<p<\frac{n+2}{n-2}\) on a complete Riemannian manifold of dimension \(n\geqslant3\) with nonnegative Ricci tensor. Grigor’yan-Sun \cite{GS2014} proved that equation \eqref{Meq-q-0-ty} admits no positive solution under condition \(\operatorname{vol}(B_R(x_0))\leqslant CR^{\frac{2p}{p-1}}\ln^{\frac{1}{p-1}}R\), corresponding with the Serrin subcritical case \(p<\frac{n}{n-2}\) in dimension \(n\), when \(p>1\) on a connected geodesically complete Riemannian manifold.

In the critical case  \(p=\frac{n+2}{n-2}\), the equation \eqref{Meq-q-0-ty} becomes
\[-\Delta u =u^{\frac{n+2}{n-2}}\quad\text{in }M^n, \]
which is the Euler-Lagrange equation of the Sobolev inequality. The critical case in \(\mathbb R^n\) has been successfully resolved, as can be seen in \cite{ CGS1989, CL1991,GNN1981}, all positive solutions must be of the form 
\begin{equation}\label{solution-q-0}
u(x)=\Big(\frac{\sqrt{n(n-2)}\lambda}{1+(\lambda|x-x_0|)^2}\Big)^{\frac{n-2}{2}}.
\end{equation}
On generally Riemannian manifolds, the classification result turns to the rigidity result: the manifold must be \(\mathbb R^n\) if there exists a nonconstant solution. However, deriving the rigidity result is more challenging. Fogagnolo-Malchiodi-Mazzieri \cite{FMM2023} got the rigidity result under the assumption \(u=O(r^{-\frac{n-2}{2}})\) at infinity. It is worth mentioning that Catino-Monticelli \cite{CM2022} obtained the rigidity result and classified it with \(n=3\), while \(n\geqslant4\) either under the finite energy condition or under the decay assumption of the solution at infinity. Ciraolo-Farina-Polvara \cite{CFP2024} extended these results to \(n=4,5\) without any finite energy condition by the \(P\)-function method. Catino-Monticelli-Roncoroni \cite{CMR2023}, Ou \cite{O2025}, and Sun-Wang \cite{SW2025} extended the classification results on manifolds to \(m\)-Laplacian case.

Now, we return to the case with the gradient term. A crucial prototype of semilinear equation \eqref{Meq-gradient} is the case \(f(u)=u^p\), i.e.,
\begin{equation}\label{eq-u^p}
-\Delta u = u^p|\nabla u|^q\quad\text{in }M^n.
\end{equation}
There are three important curves of \((p,q)\) for this equation, namely the homogeneous curve \(A_0(p,q)=0\), the first critical curve \(A_1(p,q)=0\), and the second critical curve \(A_2(p,q)=0\), with
\[A_0(p,q)=p+q-1,\quad A_1(p,q)=(n-2)p+(n-1)q-n,\]
\[A_2(p,q)=(n-2)p+(n-1)q-\Big(n+\frac{2-q}{1-q}\Big),\quad0\leqslant q<1.\]
As shown in Figure \ref{fig-1}, we mark these three curves with \(A_0\), \(A_1\), and \(A_2\) respectively. We introduce
\[2_*(q)=\frac{2-q}{n-2}(n-1),\quad 2^*(q)=\frac{2-q}{n-2}(n+\frac{q}{1-q}),\]
thus the first subcritical range \(A_1(p,q)<0\) is equivalent with \(p<2_*(q)-1\), and the second subcritical range \(A_2(p,q)<0\) is equivalent with \(p<2^*(q)-1\) when \(0\leqslant q<1\).

In the special case \(p=0\), equation \eqref{eq-u^p} becomes \(-\Delta u=|\nabla u|^q\), called as the Hamilton-Jacobi equation. For this equation, Lions \cite{L1985} obtained a Liouville theorem for \(q>1\) in \(\mathbb R^n\). Bidaut-V\'{e}ron, Huidobro, and V\'{e}ron \cite{BV-GH-V2014} established the gradient estimate to \(m\)-Laplacian case and obtained some Liouville theorems on complete noncompact manifolds, which satisfy a lower bound estimate on the Ricci curvature and sectional curvature.

In the Euclidean case, there has already been a great deal of work on equation \eqref{eq-u^p}. In the first subcritical case \(p<2_*(q)-1\), any supersolution to \eqref{eq-u^p} must be constant, see \cite{BP-GM-Q2016, BV-GH-V2019, CHZ2022, CM1997, F2009, MP2001}. Those past proofs are usually composed of three cases split by the homogeneous curve \(A_0(p,q)=0\), which is avoided in our proof of Theorem \ref{thm-Serrin}. Besides, for \(q\geqslant2\) and \(p\geqslant0\), Filippucci-Pucci-Souple \cite{FPS2020} obtained that any positive bounded solution to equation \eqref{eq-u^p} is constant. Later, Bidaut-V\'{e}ron \cite{BV2021} extended this result to the \(m\)-Laplace equation for \(q\geqslant m\) and established a Liouville theorem with \(p\geqslant 0\).

Now we concentrate on the second subcritical range \(p<2^*(q)-1\) with \(0\leqslant q<1\) for equation \eqref{eq-u^p} in \(\mathbb R^n\). For \(0<q<2\) and \(p>0\), Bidaut-V\'{e}ron, Huidobro, and V\'{e}ron \cite{BV-GH-V2019} conjectured that any solution to \eqref{eq-u^p} must be constant in the second critical range. They gave a positive answer to the left of the curve \(G(p,q)=0\) in Figure \ref{fig-1}, where 
\[G(p,q)=\big((n-1)^2q+n-2\big)p^2+c(q)p-nq^2,\]
\[c(q)=n(n-1)q^2-(n^2+n-1)q-n-2.\]
Besides, they also showed that the second critical curve is optimal by proving the existence of a nonconstant solution in the second supercritical range \(A_2(p,q)>0\). Using Bernstein's technique, Chang-Hu-Zhang \cite{CHZ2022} derived a Liouville theorem in \(m\)-Laplacian case, and the left of the curve \(C\) in Figure \ref{fig-1} shows their range when \(m=2\). An exciting result was obtained by Ma-Wu \cite{MW2023}. With the help of integral identities, they completely solved the conjecture in \cite{BV-GH-V2019} when \(0<q<\frac{1}{n-1}\), and extended the range \(G(p,q)<0\) in \cite{BV-GH-V2019} to \(H(p,q)<0\) when \(q\geqslant\frac{1}{n-1}\), where
\[H(p,q)=p^2+\Big(\frac{n-1}{n-2}q-\frac{n^2-3}{(n-2)^2}\Big)p+\frac{1-(n-1)q}{(n-2)^2}.\]

\begin{figure}[htbp]\small
\centering
\includegraphics[width=0.8\textwidth]{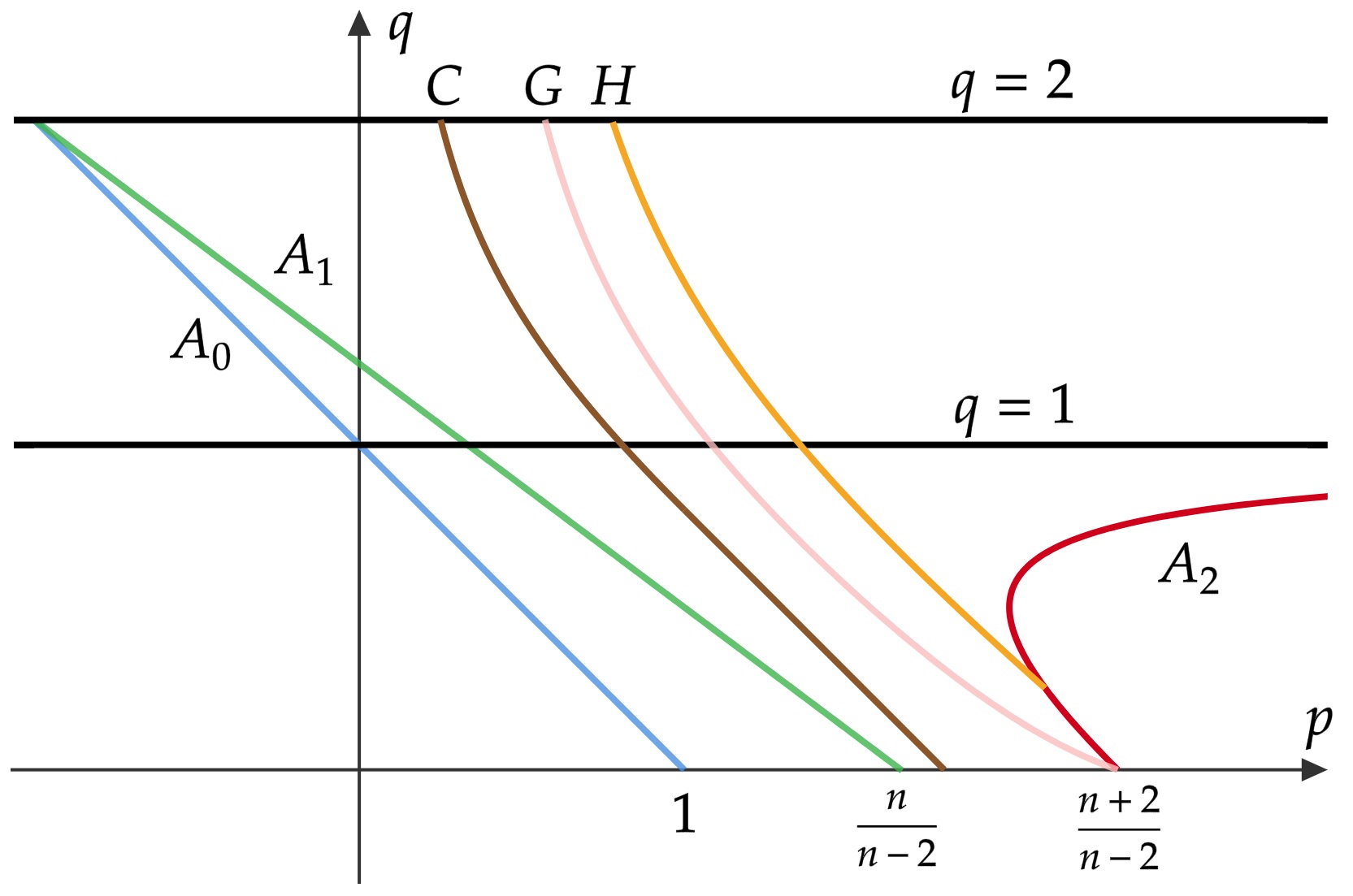}
\caption{some parameter ranges of equation \eqref{eq-u^p} in \(\mathbb R^n\) when \(n=5\)}
\label{fig-1}
\end{figure}

Return to the Riemannian manifold case. Sun-Xiao-Xu \cite{SXX2022} obtained some Liouville theorems of equation \eqref{eq-u^p} under the growth of the geodesical ball and some range \((p,q)\in\mathbb R^2\) on a complete, noncompact Riemannian manifold. He-Hu-Wang \cite{HHW2023} obtained that any \(C^1\) smooth positive solution of equation \eqref{eq-u^p} is a constant when \(p+q<\frac{n+3}{n-1}\) on complete noncompact Riemannian manifold with nonnegative Ricci curvature. More generally, both \cite{HHW2023} and \cite{SXX2022} studied \(m\)-Laplacian case.

Thus, research on Liouville theorems for equation \eqref{eq-u^p} on manifolds is relatively scarce, let alone equation \eqref{Meq-gradient} with general semilinear term \(f(u)\), which motivates us to study equation \eqref{Meq-gradient} on manifolds. First, we consider supersolutions to \eqref{Meq-gradient}.

\begin{definition}
We say that \(u\) is a supersolution to \eqref{Meq-gradient} if \(u\in H_{\mathrm{loc}}^1(M^n)\cap C(M^n)\) satisfies
\begin{align}\label{weak-supsol}
\int_{M^n} \nabla u\cdot\nabla\varphi\geqslant\int_{M^n}f(u)|\nabla u|^q\varphi,\quad\varphi\in C_c^\infty(M^n),~\varphi\geqslant 0.
\end{align}
\end{definition}

\begin{remark}
By an approximation argument, the test function \(\varphi\) may be chosen from \(H^1(M^n)\) with compact support.
\end{remark}

Under an integrable condition on \(f\), we obtain the following Liouville theorem by Serrin's technique.

\begin{theorem}\label{thm-Serrin}
Let \((M^n, g)\) be a complete, connected, and noncompact  Riemannian manifold of dimension \(n\geqslant3\) with nonnegative Ricci curvature. Assume that \(f\in C(\mathbb R_+;\mathbb R_+)\) satisfies
\begin{equation}\label{cond-Serrin}
\int_0^1[f(v)]^{-\frac{1}{2_*(q)-1}}\mathrm dv<+\infty.
\end{equation}
Then any positive supersolution to \eqref{Meq-gradient} must be constant.
\end{theorem}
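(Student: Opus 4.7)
I would adapt Serrin's integral technique by building a test function tailored to \eqref{cond-Serrin}. Set $\alpha = 2_*(q) - 1 = (n - (n-1)q)/(n-2)$ and $H(u) = \int_0^u f(s)^{-1/\alpha}\,ds$; the hypothesis \eqref{cond-Serrin} makes $H$ finite on $(0,\infty)$. Define
\[
G(u) := \big[c(\mu-1)H(u)\big]^{-1/(\mu-1)}, \qquad \mu = \frac{n(1-q)+2}{n-(n-1)q} > 1,
\]
for a positive constant $c$, equivalently the unique positive solution of the ODE $-G'(u) = c\, f(u)^{-1/\alpha}\, G(u)^{\mu}$ blowing up as $u \to 0^+$. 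Then $G$ is smooth, strictly positive, strictly decreasing on $(0,\infty)$, and bounded on any compact subset of $M^n$ (since $u$ is continuous and positive), so $\varphi = G(u)\eta^\beta$ is an admissible test function in \eqref{weak-supsol} for every nonnegative smooth cutoff $\eta$ of compact support and any $\beta \geq n$.

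Plugging $\varphi$ into \eqref{weak-supsol} and rearranging (using $G' < 0$) gives
\[
\beta \int_{M^n} G(u)\eta^{\beta-1}\nabla u \cdot \nabla\eta \geq \int_{M^n} G(u) f(u) |\nabla u|^q \eta^\beta + \int_{M^n} (-G'(u))|\nabla u|^2 \eta^\beta.
\]
I would next apply the three-term weighted AM--GM inequality to the LHS integrand with weights
\[
\theta_1 = \frac{n-2}{n(2-q)}, \quad \theta_2 = \alpha\theta_1, \quad \theta_3 = \frac{1}{n},
\]
these being forced by $q\theta_1 + 2\theta_2 = 1$ and $\theta_1 + \theta_2 + \theta_3 = 1$. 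The key algebraic fact is that the $u$-dependent coefficient of the Young remainder, $G^{(1-\theta_1)/\theta_3}(-G')^{-\theta_2/\theta_3}f^{-\theta_1/\theta_3}$, collapses to a pure constant thanks to the ODE for $G$ combined with $\theta_2 = \alpha\theta_1$. A parametric Young then absorbs the $|\nabla u|^q$ and $|\nabla u|^2$ pieces into the LHS, leaving
\[
\int_{M^n} G(u) f(u) |\nabla u|^q \eta^\beta + \int_{M^n}(-G'(u))|\nabla u|^2\eta^\beta \leq C \int_{M^n} \eta^{\beta-n}|\nabla\eta|^n.
\]

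Because $1/\theta_3 = n$ matches the Bishop--Gromov volume growth exponent exactly, a standard hat cutoff would yield only $\int_{B_{R_0}} Gf|\nabla u|^q \leq C$; I would therefore use a logarithmic cutoff $\eta \equiv 1$ on $B_{R_0}$, $\eta(x) = \log(R/r(x))/\log(R/R_0)$ on $B_R\setminus B_{R_0}$ (with $r(x) = \operatorname{dist}_g(x,x_0)$), and $\eta \equiv 0$ outside $B_R$, giving $|\nabla\eta| = 1/(r\log(R/R_0))$ on the annulus. The Bishop--Gromov comparison $\operatorname{vol}(\partial B_r) \leq C r^{n-1}$ (available since $\operatorname{Ric}\geq 0$) then yields
\[
\int_{M^n} \eta^{\beta-n}|\nabla\eta|^n \leq \frac{C}{(\log(R/R_0))^n}\int_{R_0}^R \frac{dr}{r} = C(\log(R/R_0))^{1-n},
\]
which tends to $0$ as $R\to\infty$ thanks to $n\geq 3$. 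Passing to the limit forces $\int_{B_{R_0}} G(u)f(u)|\nabla u|^q = 0$, and since $G, f > 0$ pointwise this gives $\nabla u \equiv 0$ a.e.\ on $B_{R_0}$; the arbitrariness of $R_0$ together with connectedness of $M^n$ finish the argument.

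\textbf{Expected obstacle.} The delicate point is engineering $G$ so that the three Young exponents align perfectly: $\theta_2 = \alpha\theta_1$ is exactly what produces the power $1/\alpha$ in $-G' = cf^{-1/\alpha}G^\mu$, so that \eqref{cond-Serrin} is the precise scaling needed for $G$ to be well-defined. The logarithmic cutoff is the other delicate ingredient, forced by the fact that \eqref{cond-Serrin} saturates the $n$-dimensional volume growth and a linear cutoff loses exactly one factor of $\log R$. This unified construction via $G$ also sidesteps the traditional case split along the homogeneous curve $A_0(p,q) = p+q-1 = 0$.
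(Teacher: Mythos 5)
Your proposal is correct, and the core construction coincides with the paper's: your $G$ is (up to a constant) exactly the paper's $b$, defined by $\tilde b(u)=[b(u)]^{-(2-q)/(n-(n-1)q)}=\int_0^u f^{-1/(2_*(q)-1)}$, and your three Young weights $\theta_1,\theta_2,\theta_3$ are the ones the paper implicitly uses to make the remainder coefficient $[f]^{-\frac{n-2}{2-q}}[\tilde b']^{-\frac{n-(n-1)q}{2-q}}$ collapse to a constant. Where you genuinely diverge is the endgame. The paper keeps the Young parameter $A$ free and uses a \emph{plain} linear cutoff; it first sets $A=\frac12$ to conclude that $\int_{M^n}\bigl(b(u)f(u)|\nabla u|^q-b'(u)|\nabla u|^2\bigr)$ is finite, deduces that the annulus contribution $\int_{B_{2R}\setminus B_R}$ tends to $0$, then for arbitrary $A$ lets $R\to\infty$ to get the bound $CA^{1-n}$, and finally sends $A\to\infty$. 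You instead absorb the two good terms with fixed small $\epsilon$ and switch to a \emph{logarithmic} cutoff, so that $\int\eta^{\beta-n}|\nabla\eta|^n\lesssim(\log(R/R_0))^{1-n}\to0$ directly. Both are standard ways to break the borderline case $\frac1{\theta_3}=n$; the paper's two-step $A$-argument avoids the log cutoff (and the coarea or dyadic estimate on the annulus it requires), while your log cutoff avoids the finiteness-then-vanishing step. Either proof is complete; just be aware that your $\text{vol}(\partial B_r)\leq Cr^{n-1}$ step is most cleanly justified via a dyadic decomposition of $B_R\setminus B_{R_0}$ combined with the Bishop--Gromov volume bound, since level sets of the distance function need not be smooth.
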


When \(\liminf_{v\to0^+}f(v)>0\), the condition \eqref{cond-Serrin} trivially holds.

\begin{corollary}
Let \((M^n, g)\) be a complete, connected, and noncompact  Riemannian manifold of dimension \(n\geqslant3\) with nonnegative Ricci curvature. Assume that \(f\in C(\mathbb R_+;\mathbb R_+)\) satisfies
\[\liminf_{v\to0^+}f(v)>0.\]
Then any positive supersolution to \eqref{Meq-gradient} must be constant.
\end{corollary}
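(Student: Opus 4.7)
The plan is to verify the integrability condition \eqref{cond-Serrin} directly from the pointwise lower bound $\liminf_{v\to 0^+} f(v) > 0$ and then invoke Theorem \ref{thm-Serrin}. The corollary is precisely the remark immediately preceding its statement, so the work amounts to making that remark rigorous.

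First, unfolding the definition of $\liminf$, I would fix constants $c > 0$ and $\delta \in (0,1]$ such that $f(v) \geqslant c$ for every $v \in (0,\delta)$. On the compact interval $[\delta,1]$ the function $f$ is continuous and strictly positive, hence bounded below by some $c' > 0$. Combining the two bounds gives a uniform lower bound $f(v) \geqslant m := \min\{c, c'\} > 0$ for all $v \in (0,1]$.

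Second, in the regime where Theorem \ref{thm-Serrin} is applied the exponent $\frac{1}{2_*(q)-1}$ is positive, so raising the lower bound to the negative power $-\frac{1}{2_*(q)-1}$ yields $[f(v)]^{-\frac{1}{2_*(q)-1}} \leqslant m^{-\frac{1}{2_*(q)-1}}$ on $(0,1)$. Integrating gives $\int_0^1 [f(v)]^{-\frac{1}{2_*(q)-1}}\,\mathrm dv \leqslant m^{-\frac{1}{2_*(q)-1}} < +\infty$, which is exactly condition \eqref{cond-Serrin}.

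Third, with \eqref{cond-Serrin} now verified, Theorem \ref{thm-Serrin} applies and forces every positive supersolution of \eqref{Meq-gradient} to be constant. There is essentially no obstacle here; the corollary simply records a convenient and easily-checked sufficient condition for the integrability hypothesis of the theorem, useful whenever $f$ is bounded away from $0$ near the origin.
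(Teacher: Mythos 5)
Your proof is correct and takes essentially the same approach as the paper, which simply remarks that condition \eqref{cond-Serrin} ``trivially holds'' when $\liminf_{v\to0^+}f(v)>0$ and then invokes Theorem \ref{thm-Serrin}. You have merely spelled out the straightforward verification (uniform lower bound on $(0,1]$ via $\liminf$ near $0$ plus continuity on $[\delta,1]$, then boundedness of the integrand since $2_*(q)-1>0$ in the relevant range), which matches the intended argument.
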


Theorem \ref{thm-Serrin} recovers the first subcritical range \(p<2_*(q)-1\) by taking \(f(u)=u^p\).

\begin{corollary}
Let \((M^n, g)\) be a complete, connected, and noncompact  Riemannian manifold of dimension \(n\geqslant3\) with nonnegative Ricci curvature. Then any positive supersolution to \eqref{eq-u^p} must be constant if \(p<2_*(q)-1\),
\end{corollary}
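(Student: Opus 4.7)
The plan is to obtain this corollary as an immediate specialization of Theorem~\ref{thm-Serrin} with the choice $f(u)=u^p$. First I would check the standing hypotheses on $f$ in Theorem~\ref{thm-Serrin}: the function $f(u)=u^p$ is continuous and strictly positive on $\mathbb{R}_+$ for every real $p$, so the regularity requirement is free. The only remaining task is to show that the integrability condition \eqref{cond-Serrin} is equivalent, under the pure power choice of $f$, to the subcritical inequality $p<2_*(q)-1$.

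Substituting $f(v)=v^p$ into \eqref{cond-Serrin} reduces it to the standard power integral
$$\int_0^1 v^{-\frac{p}{2_*(q)-1}}\,\mathrm dv,$$
which converges precisely when $\frac{p}{2_*(q)-1}<1$. A short computation from the definition of $2_*(q)$ gives
$$2_*(q)-1=\frac{(2-q)(n-1)-(n-2)}{n-2}=\frac{n(1-q)+q}{n-2},$$
which is strictly positive in the relevant parameter range, so the convergence criterion rearranges cleanly to $p<2_*(q)-1$. Once this equivalence is in hand, Theorem~\ref{thm-Serrin} applies directly and yields that every positive supersolution to \eqref{eq-u^p} must be constant. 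There is no substantive obstacle in the argument: the corollary is in essence the statement that, for pure power nonlinearities, the integrability condition \eqref{cond-Serrin} is a precise encoding of Serrin's first subcritical threshold.
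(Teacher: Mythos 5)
Your argument is correct and is exactly the intended derivation: with $f(v)=v^p$ the condition \eqref{cond-Serrin} reduces to $\int_0^1 v^{-p/(2_*(q)-1)}\,\mathrm dv<\infty$, and since $2_*(q)-1=\frac{n-(n-1)q}{n-2}>0$ in the regime under consideration, this is equivalent to $p<2_*(q)-1$, after which Theorem~\ref{thm-Serrin} applies. The paper presents the corollary as an immediate specialization of Theorem~\ref{thm-Serrin} without writing out this computation, so your proposal matches the paper's route.
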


For solutions to \eqref{Meq-gradient}, we concentrate on the cases \(0<q<\frac{1}{n-1}\). We establish the following Liouville theorem. Our proof also works for the case \(q=0\), which recovers the \(m=2\) case in Serrin-Zou \cite{SZ2002} in \(M=\mathbb R^n\).

\begin{theorem}\label{thm-subcritical}
Let \((M^n, g)\) be a complete, connected, and noncompact Riemannian manifold of dimension \(n\geqslant3\) with nonnegative Ricci curvature, and \(0<q<\frac{1}{n-1}\). Assume that \(f\in C^1(\mathbb R_+;\mathbb R_+)\) satisfies the following conditions:
\begin{equation}\label{cond-sub-1}
f(u)\geqslant u^{\beta_0},~u>N,\quad\text{for some }\beta_0\in(1-q,2^*(q)-1)\text{ and }N>0,
\end{equation}
\begin{equation}
uf'(u)\leqslant\alpha_0 f(u),\quad\text{for some }\alpha_0\in[\beta_0,2^*(q)-1).
\end{equation}
Then any positive \(C^3\) solution to \eqref{Meq-gradient} must be constant if \(p<2^*(q)-1\).
\end{theorem}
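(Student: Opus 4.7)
The plan is to follow the Bernstein--Bochner strategy used by Ma--Wu \cite{MW2023} in the Euclidean setting and transplant it to the Riemannian manifold, exploiting \(\mathrm{Ric}\geqslant 0\) to discard the curvature term that appears in Bochner's formula. I would start from
\[\tfrac{1}{2}\Delta|\nabla u|^{2}=|\nabla^{2}u|^{2}+\langle\nabla u,\nabla\Delta u\rangle+\mathrm{Ric}(\nabla u,\nabla u),\]
substitute \(\Delta u=-f(u)|\nabla u|^{q}\) (using \(f\in C^{1}\) to differentiate the right-hand side), and drop the favourable Ricci term. Next, invoking the invariant tensor technique advertised in the abstract, I would introduce a traceless tensor \(E_{ij}\) whose vanishing identifies the explicit Bidaut-V\'eron--Garc\'\i a-Huidobro--V\'eron bubble profiles of \cite{BV-GH-V2019}, so that \(|\nabla^{2}u|^{2}\) decomposes as \(\tfrac{(\Delta u)^{2}}{n-1}+|E|^{2}+\text{correction}\). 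Combining this decomposition with a weighted multiplier of the form \(u^{-a}|\nabla u|^{b}\), with exponents tuned to the conformal structure of the equation, should yield a divergence inequality schematically of the form
\[\operatorname{div}(X)\geqslant c\,|E|^{2}u^{-a}|\nabla u|^{b}+\bigl(2^{\ast}(q)-1-\Lambda(u)\bigr)u^{-a'}|\nabla u|^{q+b'}f(u),\]
where \(\Lambda(u):=uf'(u)/f(u)\leqslant\alpha_{0}\) by the second hypothesis and \(c>0\). The restriction \(0<q<1/(n-1)\) enters precisely here: it is the range in which the coefficient of the ``good square'' \(|E|^{2}\) stays nonnegative.

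\textbf{Cut-off integration.} I would then test the above against \(\eta^{2k}\), where \(\eta\) is a standard Lipschitz cut-off with \(\eta\equiv 1\) on \(B_{R}(x_{0})\), \(\operatorname{supp}\eta\subset B_{2R}(x_{0})\), and \(|\nabla\eta|\leqslant C/R\). H\"older and Young inequalities absorb the cross terms, while the growth hypothesis \(f(u)\geqslant u^{\beta_{0}}\) on \(\{u>N\}\) with \(\beta_{0}>1-q\) reabsorbs the gradient-of-\(\eta\) contributions in the large-\(u\) region (the region \(\{u\leqslant N\}\) is controlled by continuity of \(f\)). The outcome should be an estimate of the shape
\[\int_{B_{R}}|E|^{2}u^{-a}|\nabla u|^{b}\eta^{2k}+\delta\int_{B_{R}}u^{-a'}|\nabla u|^{q+b'}f(u)\eta^{2k}\leqslant CR^{-\sigma}\,\operatorname{vol}(B_{2R}),\]
with \(\sigma>0\) in the strictly subcritical range \(\alpha_{0},\beta_{0}<2^{\ast}(q)-1\). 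Bishop--Gromov volume control from \(\mathrm{Ric}\geqslant 0\) keeps the right-hand side sub-polynomial, so letting \(R\to\infty\) forces both integrands to vanish: first \(E\equiv 0\), and then the vanishing of \(|\nabla u|\) follows, forcing \(u\) to be constant.

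\textbf{Main obstacle.} The delicate step is the algebraic bookkeeping that produces the identity in the first paragraph with \emph{both} coefficients nonnegative simultaneously: the correct tensor \(E_{ij}\) and the exponents \((a,b)\) must be pinned down by matching against the BV-GH-V bubble, and the sharp threshold \(q<1/(n-1)\) emerges only after careful balancing of the resulting polynomial inequalities in \(n\) and \(q\). The Riemannian setting adds the complication that commuting covariant derivatives generates additional curvature terms; these appear with favourable sign thanks to \(\mathrm{Ric}\geqslant 0\), but tracking them without spoiling the delicate Euclidean algebra of \cite{MW2023} is the technical heart of the argument.
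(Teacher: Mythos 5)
Your proposal correctly anticipates the paper's strategy at a high level: construct an invariant tensor \(\mathbf E\) tuned to the BV-GH-V bubble, derive a weighted divergence inequality with good sign, test against a cutoff, split into small-\(u\) and large-\(u\) regions using the two hypotheses on \(f\), and invoke Bishop--Gromov with \(\operatorname{Ric}\geqslant0\). The threshold \(q<\frac{1}{n-1}\) indeed emerges from the positivity of the coefficient multiplying the \(|\mathbf E\cdot\nabla u|^2/|\nabla u|^2\) piece after the parameters are fixed, as you predict.

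There is, however, a genuine gap that you do not address. Because the equation carries a gradient term, the invariant tensor cannot be a corrected traceless Hessian alone; the one that closes the algebra necessarily contains a factor \(\Delta u/|\nabla u|^{2}\,\nabla u\otimes\nabla u\), hence is singular on the critical set \(Z=\{\nabla u=0\}\). The divergence identity therefore only holds on \(Z^{c}\), and integrating by parts over \(Z^{c}\cap B_{2R}\) produces a boundary integral over \(\partial Z\) \emph{in addition to} the cutoff contribution. The paper disposes of this by showing that the exponents \(d+q+1\), \(d+3\), \(d-q+2\) are all positive (so the powers of \(|\nabla u|\) kill the algebraic pieces of \(\mathbf E\) on \(Z\)) and by estimating the Hessian contribution via
\[
|\nabla u|^{d}\,|\nabla^{2}u:\nabla u\otimes\nu|\ \leqslant\ \tfrac{1}{q}\,|\nabla u|^{d-q+2}\,\bigl|\nabla\tfrac{\Delta u}{f(u)}\bigr|,
\]
which vanishes on \(Z\) because \(u\in C^{3}\) makes \(\nabla\frac{\Delta u}{f(u)}\) locally bounded. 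Your proposal proceeds as though the identity held globally, and this \(\partial Z\) term is silently omitted; without it the integration-by-parts step is unjustified.

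A smaller point: the final deduction in the subcritical case is not ``\(\mathbf E\equiv 0\), then \(|\nabla u|\equiv 0\).'' The choice \(a(u)=(\frac{n-2}{n}+\varepsilon)u\) leaves a residual \(\frac{|\nabla u|^{4}}{u^{2}}\) summand with a strictly positive coefficient on the right-hand side of the divergence inequality; letting \(R\to\infty\) forces \(\int_{Z^{c}}u^{\beta-2}|\nabla u|^{d+4}=0\) directly, giving \(Z^{c}=\emptyset\) and hence constancy of \(u\) in one step. The two-stage logic you describe (\(\mathbf E=0\), then classify) is what happens in the \emph{critical} Theorem \ref{thm-critical}, not here.
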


Theorem \ref{thm-subcritical} recovers the second subcritical range \(p<2^*(q)-1\) by taking \(f(u)=u^p\).

\begin{corollary}
Let \((M^n, g)\) be a complete, connected, and noncompact Riemannian manifold of dimension \(n\geqslant3\) with nonnegative Ricci curvature, and \(0<q<\frac{1}{n-1}\). Then any positive \(C^3\) solution to \eqref{eq-u^p} must be constant.
\end{corollary}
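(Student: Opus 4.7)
The plan is to deduce this corollary as a direct application of Theorem~\ref{thm-subcritical} to $f(u)=u^p$, together with Theorem~\ref{thm-Serrin} on a small sub-range. Consistent with the sentence ``Theorem~\ref{thm-subcritical} recovers the second subcritical range $p<2^{*}(q)-1$'' and with the analogous corollary after Theorem~\ref{thm-Serrin}, I read the implicit hypothesis as $p<2^{*}(q)-1$; the case $p\leqslant 0$ or $p=0$ is either void or already covered by Cheng--Yau plus Hamilton--Jacobi-type results, so I focus on $p>0$.

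As a preliminary I would verify the algebraic inequality $2_{*}(q)-1>1-q$ for $0<q<1$: unwrapping the definitions, this is equivalent to $(2-q)(n-1)-(n-2)>(1-q)(n-2)$, i.e.\ to $n-q>n-2$, which holds since $q<2$. This lets me split the admissible range at $p=2_{*}(q)-1$ with no gap or overlap issue.

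For $p<2_{*}(q)-1$, the choice $f(v)=v^p$ turns condition \eqref{cond-Serrin} into $\int_0^1 v^{-p/(2_{*}(q)-1)}\,dv<+\infty$, which holds precisely because $p/(2_{*}(q)-1)<1$. Since any positive $C^3$ solution is \emph{a fortiori} a supersolution, Theorem~\ref{thm-Serrin} forces it to be constant. For $2_{*}(q)-1\leqslant p<2^{*}(q)-1$, I would instead apply Theorem~\ref{thm-subcritical} with $\beta_0=\alpha_0=p$: the map $u\mapsto u^p$ lies in $C^1(\mathbb R_+;\mathbb R_+)$, the bound $f(u)\geqslant u^{\beta_0}$ is an equality, and $uf'(u)=p\,u^p=\alpha_0 f(u)$. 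The compatibility requirements $\beta_0\in(1-q,2^{*}(q)-1)$ and $\alpha_0\in[\beta_0,2^{*}(q)-1)$ both collapse to $1-q<p<2^{*}(q)-1$, which is secured by the preliminary inequality $2_{*}(q)-1>1-q$ and the assumed range.

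No genuine analytic obstacle arises here: the corollary is a bookkeeping consequence of the two preceding theorems. The only point demanding attention is whether the split at $p=2_{*}(q)-1$ leaves a gap, and the asymmetric strict/non-strict pairing above is precisely what avoids one.
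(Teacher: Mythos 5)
Your proposal is correct. The paper gives no written proof of this corollary and introduces it by the one-line remark that ``Theorem~\ref{thm-subcritical} recovers the second subcritical range $p<2^*(q)-1$ by taking $f(u)=u^p$.'' Taken literally, that single appeal has a small blind spot: with $f(u)=u^p$, the hypotheses of Theorem~\ref{thm-subcritical} force $\beta_0\leqslant p\leqslant\alpha_0$ together with $\beta_0>1-q$, so they are only satisfiable for $p>1-q$; for $p\leqslant 1-q$ the condition $u^p\geqslant u^{\beta_0}$ for $u>N$ cannot hold for large $u$. You correctly identify this and bridge it with Theorem~\ref{thm-Serrin}, splitting at $p=2_*(q)-1$ and checking $\int_0^1 v^{-p/(2_*(q)-1)}\,dv<\infty$ on the low range while taking $\beta_0=\alpha_0=p$ on the high range; since $2_*(q)-1>1-q$ the two pieces leave neither a gap nor an overlap. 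So your route is the same in spirit as the paper's but more careful, and it is in fact what is needed to justify the corollary over the full implicit range $p<2^*(q)-1$. One cosmetic slip: in verifying $2_*(q)-1>1-q$ the line ``$(2-q)(n-1)-(n-2)>(1-q)(n-2)$, i.e.\ $n-q>n-2$'' is not the right intermediate reduction --- after collecting $(n-2)$ terms one gets $(2-q)(n-1)>(2-q)(n-2)$, i.e.\ $n-1>n-2$, equivalently $2-q>0$ --- but the conclusion $q<2$ and hence the split are unaffected.
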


By taking \(M=\mathbb R^n\), this corollary recovers the case \(0<q<\frac{1}{n-1}\) in \cite[Theorem 1.3]{MW2023}, where Ma and Wu solved the conjectured raised in \cite{BV-GH-V2019}.

Lastly, we consider the second critical case \(f(u) = u^{2^*(q)-1}\). Bidaut-V\'{e}ron, Huidobro, and V\'{e}ron \cite[Theorem D]{BV-GH-V2019} found an explicit 1-parameter family of positive radial solutions in \(\mathbb R^n\)
\[u_c(r)=c(Kc^{\frac{(2-q)^2}{(n-2)(1-q)}}+r^{\frac{2-q}{1-q}})^{-\frac{(n-2)(1-q)}{2-q}},\quad c>0,~K=K(n,q)>0.\]
By inserting them into equation \eqref{Meq-gradient} with \(f(u) = u^{2^*(q)-1}\), we rewrite the solutions as
\begin{equation}\label{solution}
U_{\lambda,x_0}(x):=\Big(\frac{(n+\frac{q}{1-q})^{\frac{1}{2-q}}(n-2)^{\frac{1-q}{2-q}}\lambda}{1+(\lambda|x-x_0|)^{\frac{2-q}{1-q}}}\Big)^{\frac{(n-2)(1-q)}{2-q}},\quad\lambda>0,~x_0\in\mathbb R^n.
\end{equation}
For \(q=0\), the solution \eqref{solution} is equivalent to \eqref{solution-q-0}. The rigidity of manifolds has been rigorously established in low-dimensional settings for \(q=0\), but for \(0<q<1\), the classification of equation \eqref{Meq-gradient} in the second critical case \(f(u) = u^{2^*(q)-1}\) on \(\mathbb R^n\) is open, let alone the rigidity of manifolds.

With the help of the invariant tensor \(\operatorname{\mathbf E}\) and its corresponding pseudo-invariant function \(l\) (i.e., \(\nabla l\) is only composed of terms involving the invariant tensor \(\operatorname{\mathbf E}\)), we obtain the rigidity for the ambient manifold and classify positive solutions of the second critical equation, \eqref{Meq-gradient} with \(f(u)=u^{2^*(q)-1}\), without any assumption in a low-dimensional setting. To our knowledge, this is the first rigidity result in the second critical case of equations with gradient terms.

\begin{theorem}\label{thm-critical}
Let \((M^n, g)\) be a complete, connected, and noncompact Riemannian manifold of dimension \(n\in\{3,4,5\}\) with nonnegative Ricci curvature, \(0<q<\frac{1}{n-1}\) and \(f(u)=u^{2^*(q)-1}\) in \eqref{Meq-gradient}. Let \(u\in C^3(M)\) be a positive solution to \eqref{Meq-gradient}, then either \(u\) is constant, or \((M^n, g)\) is isometric to \(\mathbb R^n\) and \(u\) is the form given by \eqref{solution}.
\end{theorem}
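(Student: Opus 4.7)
The plan is to adapt the invariant-tensor rigidity scheme used for the critical Lane-Emden equation (Catino-Monticelli \cite{CM2022}, Ciraolo-Farina-Polvara \cite{CFP2024}) to the gradient-term setting, via the invariant tensor $\mathbf{E}$ and its companion pseudo-invariant function $l$. Motivated by the explicit family \eqref{solution}, I would first perform a linearizing change of variables $v := u^{-(2-q)/[(n-2)(1-q)]}$, so that the profile of $U_{\lambda,x_0}$ becomes an affine function of $|x-x_0|^{(2-q)/(1-q)}$. This substitution rewrites \eqref{Meq-gradient} as a quasilinear equation for $v$ whose right-hand side is a polynomial in $|\nabla v|$ matching the critical structure, removing the direct dependence on $u$.

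Next, define a trace-free symmetric tensor
\[
\mathbf{E}_{ij} := \nabla_i\nabla_j v - \tfrac{\Delta v}{n}g_{ij} - \Phi(v,|\nabla v|)\Bigl(\nabla_i v\,\nabla_j v - \tfrac{|\nabla v|^2}{n}g_{ij}\Bigr),
\]
with $\Phi$ chosen so that $\mathbf{E}$ vanishes identically on the model family $U_{\lambda,x_0}$. The pseudo-invariant $l = l(v,|\nabla v|^2)$ is chosen so that $\nabla l$ is expressible as a contraction of $\mathbf{E}$ with $\nabla v$, and in particular $\mathbf{E}\equiv 0$ forces $l$ to be constant. Combining the Bochner formula for $|\nabla v|^2$ with the equation satisfied by $v$, one derives a pointwise differential identity of the schematic form
\[
\tfrac{1}{2}\operatorname{div}(X) = |\mathbf{E}|^2 + \operatorname{Ric}(\nabla v,\nabla v) + \langle \nabla l, Y\rangle,
\]
where $X, Y$ are vector fields built from $v$ and $\nabla v$ and the last term is the pseudo-invariance remainder.

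The crucial step is integration. Multiply the identity by a positive weight $w(v,|\nabla v|^2)$ and a smooth cutoff $\eta_R$ of the geodesic ball $B_R(x_0)$, integrate, and apply the divergence theorem. The Bidaut-V\'{e}ron-Huidobro-V\'{e}ron gradient estimate \cite{BV-GH-V2019} together with the a priori bound on $u$ controls the cutoff error. The dimensional restriction $n \in \{3,4,5\}$ enters precisely here: it ensures that the exponents appearing in the cutoff terms are strictly subcritical with respect to Bishop-Gromov volume growth, so the boundary contributions vanish as $R \to \infty$ without any integrability or decay hypothesis on $u$. In the limit one obtains
\[
\int_M \bigl(|\mathbf{E}|^2 + \operatorname{Ric}(\nabla v,\nabla v)\bigr)\, w\, \mathrm{d}\mu_g = 0,
\]
so that $\mathbf{E}\equiv 0$ and $\operatorname{Ric}(\nabla v,\nabla v)\equiv 0$ on $M$.

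Finally, from $\mathbf{E}\equiv 0$ the pseudo-invariant $l$ is constant, which is an ODE along integral curves of $\nabla v$; integrating it recovers the profile \eqref{solution} for $u$. Simultaneously, the vanishing of $\mathbf{E}$ forces the regular level sets of $v$ to be totally umbilical round spheres and $g$ to split as a warped product whose warping function must agree with the Euclidean model by the same ODE; applying Bishop-Gromov with equality then yields that $(M^n,g)$ is isometric to $\mathbb{R}^n$. The principal obstacle is the integration step: designing $\Phi$, $w$, and the cutoff so that the cross term $\langle \nabla l, Y\rangle$ is a controlled divergence and the boundary contributions extinguish as $R\to\infty$. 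This is where the pseudo-invariance property of $l$ is indispensable, and why the present technique is structurally confined to $n\in\{3,4,5\}$.
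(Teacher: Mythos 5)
Your plan is, in its essentials, the same invariant-tensor scheme the paper uses: define a tensor $\mathbf{E}$ that vanishes on the model family \eqref{solution}, pair it with a pseudo-invariant function $l$ whose gradient is a contraction of $\mathbf{E}$, establish a weighted pointwise divergence inequality, integrate against cutoffs, invoke Bishop--Gromov to kill the boundary contribution when $n\in\{3,4,5\}$, and then deduce rigidity from $\mathbf{E}\equiv 0$ and $\operatorname{Ric}(\nabla u,\nabla u)\equiv 0$. The differences are procedural rather than structural. First, the paper keeps $u$ as the unknown throughout: $\mathbf{E}$ is the tensor \eqref{invariant} written directly in terms of $u$, $\nabla u$, $\nabla^2 u$, and a parameter function $a(u)$; a change of variable $v=h(u)$ is introduced only \emph{after} $\mathbf{E}\equiv 0$ is known (see Lemma \ref{critical-thm-pf}), with $h$ satisfying the ODE \eqref{h-ass} whose coefficient depends on the constant value $c_0$ of $l$ — so $h$ cannot be chosen in advance. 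Your preliminary substitution $v=u^{-(2-q)/[(n-2)(1-q)]}$ would make $l$ proportional to $v^{-1}(C|\nabla v|^{2-q}+C')$, which is workable, but note the paper explicitly records that finding $l$ through variable transformations is the hard part in the gradient setting; that is why they locate $l$ directly in the $u$-variables from the requirement that $\nabla l$ be a multiple of $\mathbf{E}\cdot\nabla u$. Second, the paper does not appeal to the Bidaut-V\'{e}ron--Huidobro--V\'{e}ron gradient estimate to control the cutoff error: they prove a self-contained integral growth bound (Lemma \ref{prop-critical-est}) by testing \eqref{Meq-gradient} with $u^{\beta_1}\eta^\gamma$, Young's inequality, H\"older interpolation, and Bishop--Gromov, and the choice of the extra factor $l^{-\mu}$ in the weight is what buys the $n=5$ case. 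Third, the paper's divergence identity \eqref{id} is obtained by a direct computation of $\operatorname{div}(u^\beta|\nabla u|^d\mathbf{E}\cdot\nabla u)$ using the commutation rule (the Ricci term enters there), not by starting from the Bochner formula for $|\nabla v|^2$. Finally, for the rigidity conclusion the paper reduces to $\nabla^2 v = \tfrac{c}{n}g$ with $\operatorname{Ric}(\nabla v,\cdot)=0$ and cites Tashiro's splitting theorem, then Lemma \ref{h-pro} to pin down the profile, rather than arguing via umbilic level sets and Bishop--Gromov equality; the two routes give the same conclusion. None of these discrepancies is a gap that would make your argument fail in principle — it is the same method seen from a slightly different angle — but the substantive work you defer to ``designing $\Phi$, $w$, and the cutoff'' is precisely the content of Propositions \ref{prop-id} and \eqref{ineq-critical-0} and Lemma \ref{prop-critical-est}, and you should not expect the BV-GH-V gradient estimate to substitute for the paper's integral estimate.
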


Differential identities are crucial in studying properties of an elliptic equation. However, establishing differential identities usually relies on some geometric background, such as the Bochner formula. Thus, finding a proper differential identity may be challenging for a sophisticated equation. In \cite{MW2024}, Ma-Wu (the third author of this paper) developed the invariant tensor technique, a powerful method of finding vector fields and differential identities without any geometric instructions, to study semilinear equations \(-\Delta u=f(u)\) on \(M^n\), which gave a new proof of Bidaut-V\'{e}ron and V\'{e}ron's rigidity result in \cite{BV-V1991}. Using the new technique, Ma-Ou-Wu \cite{MOW2025} reconstructed Jerison-Lee's identity to answer the question raised by Jerison-Lee \cite{JL1988}. Besides, Ma-Ou-Wu \cite{MOW2023} verified the subcritical rigidity conjecture raised by Wang \cite{W2022} via this new technique with sophisticated computations. Recently, Guo-Zhang \cite{GZ2025} utilised the invariant tensor technique to give a new proof of semilinear equations again. The invariant tensor technique is also widely applied to classify quasilinear cases or involving gradient terms, see \cite{LWY2025, Z2024}. 


Although we concentrate only on the case \(0<q<\frac{1}{n-1}\), we make the first attempt to successfully extend the invariant tensor technique to equations with gradient terms. The pseudo-invariant function \(l\), which plays a crucial role in the classification process, is found by differential invariance. However, it becomes difficult to find \(l\) through variable transformations as previously done. The authors have extended the invariant tensor technique to the equation \eqref{Meq-gradient} to \(m\)-Laplace in \cite{WZ}, where a wider range of \(q\) is studied with more sophisticated computations.

Throughout this paper, we assume constant \(R>1\), \(\varepsilon>0\) small enough, and \(C>0\) are constants independent of \(R\). Let \(B_R\) be the geodesic ball centered at some fixed point with radius \(R\). Unless otherwise specified, we employ the summation convention for repeated indices from \(1\) to \(n\). In proofs involving manifolds, we choose a local frame, \(\varphi_i\) denotes the covariant derivative of the function \(\varphi\). Let \(\eta\) be a smooth cutoff function supported in \(B_{2R}\) satisfying \(\eta\equiv 1\) in \(B_R\), \(0\leqslant\eta\leqslant 1\), and \(|\nabla\eta|\leqslant CR^{-1}\).

This paper is structured as follows. In Section 2, we prove Theorem \ref{thm-Serrin} by using Serrin's technique with a delicate test function \(b(u)\eta^n\) and a three-term Young's inequality. In Section 3, using the invariant tensor technique, we establish a key differential identity and necessary propositions to prove Theorem \ref{thm-subcritical}. With the help of this differential identity, the proof of Theorem \ref{thm-critical}, classification of solutions in the critical case, is finished in Section 4.
\section{The first subcritical case}
 
In this section, we devote to showing the Liouville theorem for equation \eqref{Meq-gradient} in the first subcritical case, that is, we prove Theorem \ref{thm-Serrin}.

\textbf{Proof of Theorem \ref{thm-Serrin}.}
Let \(u\) be a positive supersolution of \eqref{Meq-gradient}, and take \(\varphi=b(u)\eta^n\) in \eqref{weak-supsol}, where \(b\in C^1(\mathbb R_+)\) satisfies \(b>0\) and \(b'<0\). By the definition of \eqref{weak-supsol}, it holds
\begin{equation}\label{ineq-Serrin-1}
\int_{M^n} b(u)f(u)|\nabla u|^q\eta^n-\int_{M^n}b'(u)|\nabla u|^2\eta^n\leqslant n\int_{B_{2R}\backslash B_R}b(u)\eta^{n-1}\nabla u\cdot\nabla\eta.
\end{equation}
Using Young's inequality, we have
\begin{align}\label{ineq-Serrin-2}
n\int_{B_{2R}\backslash B_R}b(u)\eta^{n-1}\nabla u\cdot\nabla\eta\leqslant~&A\int_{B_{2R}\backslash B_R} b(u)f(u)|\nabla u|^q\eta^n-A\int_{B_{2R}\backslash B_R}b'(u)|\nabla u|^2\eta^n\nonumber\\
~&+CA^{1-n}\int_{B_{2R}\backslash B_R}[f(u)]^{-\frac{n-2}{2-q}}[\tilde b'(u)]^{-\frac{n-(n-1)q}{2-q}}|\nabla\eta|^n,
\end{align}    
where \(A>0\), \(\tilde b(u)=[b(u)]^{-\frac{2-q}{n-(n-1)q}}\). Choosing \(A=\frac 1 2\), substitution of \eqref{ineq-Serrin-2} into \eqref{ineq-Serrin-1} gives
\[\int_{B_R} b(u)f(u)|\nabla u|^q-\int_{B_R}b'(u)|\nabla u|^2\leqslant C\int_{B_{2R}\backslash B_R}[f(u)]^{-\frac{n-2}{2-q}}[\tilde b'(u)]^{-\frac{n-(n-1)q}{2-q}}|\nabla\eta|^n.\]

Take \(\tilde b(u)=\int_0^u[f(v)]^{-\frac{1}{2_*(q)-1}}\mathrm dv\). Using the Bishop-Gromov volume comparison theorem \cite[Lemma 7.1.4]{P2006}, we have
\[\int_{B_R} b(u)f(u)|\nabla u|^q-\int_{B_R}b'(u)|\nabla u|^2\leqslant C,\]
we arrive at \(\lim_{R\to\infty}\big(\int_{B_{2R}\backslash B_R} b(u)f(u)|\nabla u|^q-\int_{B_{2R}\backslash B_R}b'(u)|\nabla u|^2\big)=0\).
    
By the above estimates, for any \(A > 0\), substituting \eqref{ineq-Serrin-2} into \eqref{ineq-Serrin-1} and letting \(R \to \infty\) yields
\[\int_{M^n} b(u)f(u)|\nabla u|^q\eta^n-\int_{M^n}b'(u)|\nabla u|^2\eta^n\leqslant CA^{1-n}.\]
The proof of Theorem \ref{thm-Serrin} is completed by letting \(A\to\infty\).
\qed
\section{The second subcritical case}\label{sec-subcritical}

In this section, we discuss the second subcritical case and prove Theorem \ref{thm-subcritical}. We always assume that \(u\in C^3(M^n)\) is a positive solution to equation \eqref{Meq-gradient} in this section. Set 
\[Z=\{x\in M^n\,:\,|\nabla u|=0\}.\] All subsequent discussion in this section focuses on \(Z^c:=M^n\backslash Z\).

Motivated by the invariant tensors technique introduced in \cite{MW2024}, define
\[\mathbf E=\nabla^2 u-\frac{\nabla u\otimes\nabla u}{a(u)}+c\frac{\Delta u}{|\nabla u|^2}\nabla u\otimes\nabla u-\frac{1}{n}\big((1+c)\Delta u-\frac{|\nabla u|^2}{a(u)}\big)g,\]
as the invariant tensor with its components \(E_{ij}:=\mathbf E(\partial_i,\partial_j)\), that is
\begin{equation}\label{invariant}
    E_{ij}=u_{ij}-\frac{u_i u_j}{a(u)}+c\frac{\Delta u}{|\nabla u|^2}u_i u_j-\frac{1}{n}\big((1+c)\Delta u-\frac{|\nabla u|^2}{a(u)}\big)g_{ij},
\end{equation}
where \(a:\mathbb{R}_+\to \mathbb{R}_+\) is a continuously differentiable function, and \(c\in\mathbb R\). Clearly, \(\mathbf E\) is symmetric, i.e., \(E_{ij}=E_{ji}\). The following lemma establishes the positivity of \(E_{ij}\).

\begin{lemma}\label{lem-positivity}
Let \(x\in Z^c\), it holds
\[\frac{|\mathbf E\cdot\nabla u|^2}{|\nabla u|^2}\leqslant\frac{n-1}{n}|\mathbf E|^2.\]
\end{lemma}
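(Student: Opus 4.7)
The plan is to exploit a structural property of \(\mathbf{E}\) that is not emphasized in its definition: \(\mathbf{E}\) is both symmetric and traceless. First I would verify \(g^{ij}E_{ij}=0\) by a direct computation from \eqref{invariant}: the contributions from the four terms are \(\Delta u\), \(-|\nabla u|^2/a(u)\), \(c\Delta u\), and \(-(1+c)\Delta u + |\nabla u|^2/a(u)\), which cancel in pairs regardless of the functions \(a\) and \(c\). Once tracelessness is established, the inequality becomes purely pointwise and reduces to the linear-algebra fact that for any symmetric traceless \(n\times n\) matrix \(E\) and any nonzero vector \(v\), one has \(|Ev|^2 \leqslant \frac{n-1}{n}|v|^2|E|^2\).

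To prove this pointwise claim at a fixed \(x\in Z^c\), I would choose an orthonormal frame \(\{e_i\}\) with \(e_1=\nabla u/|\nabla u|\). In this frame \(|\mathbf{E}\cdot\nabla u|^2/|\nabla u|^2=\sum_{i=1}^n E_{i1}^2\) and \(|\mathbf{E}|^2 = E_{11}^2 + 2\sum_{i\geqslant 2}E_{i1}^2 + \sum_{i,j\geqslant 2}E_{ij}^2\). A short rearrangement reduces the goal to the pointwise estimate
\[
(n-2)\sum_{i\geqslant 2}E_{i1}^2 + (n-1)\sum_{i,j\geqslant 2}E_{ij}^2 \geqslant E_{11}^2.
\]
The first summand is manifestly nonnegative, so everything rests on bounding \(E_{11}^2\) by the block sum. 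This is exactly where tracelessness is used: \(E_{11}=-\sum_{i\geqslant 2}E_{ii}\), so by Cauchy--Schwarz \(E_{11}^2\leqslant (n-1)\sum_{i\geqslant 2}E_{ii}^2 \leqslant (n-1)\sum_{i,j\geqslant 2}E_{ij}^2\), which absorbs the right-hand side.

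There is essentially no serious obstacle: once tracelessness is spotted, the rest is a two-line Cauchy--Schwarz on the diagonal. The main conceptual point is recognizing that the nonzero denominators \(a(u)\) and \(|\nabla u|^2\) appearing in \eqref{invariant} do not obstruct the trace from vanishing; the prefactor \(1/n\) and the particular combination \((1+c)\Delta u - |\nabla u|^2/a(u)\) in the definition of \(\mathbf{E}\) are precisely calibrated to make this happen. As a byproduct, equality in the lemma characterizes \(\nabla u\) as an eigenvector of \(\mathbf{E}\) whose orthogonal complement is a degenerate eigenspace.
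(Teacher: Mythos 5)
Your proof is correct and follows essentially the same route as the paper: pass to an orthonormal frame aligned with $\nabla u$, use tracelessness to write $E_{11}=-\sum_{i\geqslant2}E_{ii}$, and close the argument with Cauchy--Schwarz on the diagonal block. The only cosmetic difference is that the paper additionally diagonalizes the $(n-1)\times(n-1)$ block (so $E_{\iota\xi}=0$ for $\iota\neq\xi\geqslant 2$) and writes the final bound as a sum of squares $\frac1n\sum_{\iota<\xi}(E_{\iota\iota}-E_{\xi\xi})^2$, whereas you keep the block general and invoke Cauchy--Schwarz directly — the two are equivalent, and your observation about the equality case (a degenerate eigenspace orthogonal to $\nabla u$) is also correct.
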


\begin{proof}
Without loss of generality, we assume \(u_1=|\nabla u|>0\), \(u_\iota=0\), \(u_{\iota\xi}=0\), \(\forall2\leqslant\iota, \xi\leqslant n\), \(\iota\neq\xi\), then \(E_{\iota\xi}=0\), \(E_{1\iota}=u_{1\iota}\). By the trace-free property of \(E_{ij}\), \(E_{11}=-\sum_{\iota=2}^n E_{\iota\iota}\). Therefore,
\begin{align*}
\frac{n-1}{n}E_{ij}E^{ij}-\frac{1}{|\nabla u|^2}E_{ij}u^jE^{ik}u_k=~&\frac{n-2}{n}\sum_{\iota=2}^n|E_{1\iota}|^2+\frac{n-1}{n}\sum_{\iota=2}^n|E_{\iota\iota}|^2-\frac 1 n|E_{11}|^2\\
=~&\frac{n-2}{n}\sum_{\iota=2}^n(|E_{1\iota}|^2+|E_{\iota\iota}|^2)-\frac 2 n\sum_{2\leqslant\iota<\xi\leqslant n}E_{\iota\iota}E_{\xi\xi}\\
=~&\frac{n-2}{n}\sum_{\iota=2}^n|E_{1\iota}|^2+\frac 1 n\big(\sum_{2\leqslant\iota<\xi\leqslant n}|E_{\iota\iota}-E_{\xi\xi}|^2\big)
\geqslant~0.
\end{align*}
The lemma is proved.
\end{proof}
We would mention that if \(\operatorname{\mathbf A}\) and \(\operatorname{\mathbf B}\) are covariant tenors, then 
\begin{equation}\label{AB-cov}
\operatorname{\mathbf A}:\operatorname{\mathbf B}=g^{ij}\operatorname{\mathbf A}_{jk}g^{kl}\operatorname{\mathbf B}_{li}.
\end{equation}
Now, we show a key differential identity.

\begin{proposition}
Let \(\beta, d\in\mathbb R\). Then in \(Z^c\),
\begin{align}\label{id}
&u^{-\beta}|\nabla u|^{-d}\operatorname{div}(u^{\beta}|\nabla u|^d\mathbf E\cdot\nabla u)\nonumber\\
=~&\operatorname{tr}\mathbf E^2+\frac{d}{|\nabla u|^2}\mathbf E^2:\nabla u\otimes\nabla u+\big[\frac{\beta}{u}+\frac{2+(n-1)d}{n}\frac{1}{a(u)}\big]\mathbf E:\nabla u\otimes\nabla u\nonumber\\
&+\big\{\frac{(n-1)(1+c)q}{n}-2c-\frac{(n-1)c-1}{n}d\big\}\frac{\Delta u}{|\nabla u|^2}\mathbf E:\nabla u\otimes\nabla u+\operatorname{Ric}(\nabla u,\nabla u)\nonumber\\
&+\frac{n-1}{n}\big(a'(u)-\frac{n-2}{n}\big)\frac{|\nabla u|^4}{a^2(u)}+\frac{n-1}{n^2}(1+c)[q+(n-(n-1)q)c](\Delta u)^2\nonumber\\
&-\frac{n-1}{n}\big\{(1+c)f'(u)-\big[1+\frac{2-(n-1)q}{n}(1+c)\big]\frac{f(u)}{a(u)}\big\}|\nabla u|^{2+q}.
\end{align}
\end{proposition}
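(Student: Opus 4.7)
The identity \eqref{id} is a Bochner-type relation for the vector field $V^i := u^{\beta}|\nabla u|^{d}\, E^{i}{}_{j} u^{j}$, to be established by direct tensor computation. The plan is to expand $\operatorname{div} V$ by the product rule, compute $\nabla^{i}E_{ij}$ term by term via the Ricci commutation identity, invoke Bochner's formula to dispose of the stray $|\nabla^{2}u|^{2}$, and substitute the PDE \eqref{Meq-gradient} together with its gradient to eliminate every occurrence of $\Delta u$ and $\nabla\Delta u$. The bookkeeping miracle is that every residual monomial reassembles into one of the three structural families on the right-hand side of \eqref{id}: pure invariant-tensor traces, contractions $\mathbf E\!:\!\nabla u\otimes\nabla u$ with composite prefactors, and purely scalar terms in $|\nabla u|^{4}/a^{2}$, $(\Delta u)^{2}$, and $|\nabla u|^{2+q}$.

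\textbf{First steps.} The product rule applied to $V^i$ gives
\begin{equation*}
u^{-\beta}|\nabla u|^{-d}\operatorname{div} V \;=\; (\nabla^{i}E_{ij})u^{j} \;+\; E^{ij}u_{ij} \;+\; \frac{\beta}{u}E_{ij}u^{i}u^{j} \;+\; \frac{d}{|\nabla u|^{2}} E_{ij}u^{j}u^{k}u^{i}{}_{k}.
\end{equation*}
Substituting $u^{i}{}_{k}$ via \eqref{invariant} in the last term produces $\tfrac{d}{|\nabla u|^{2}}\mathbf E^{2}\!:\!\nabla u\otimes\nabla u$ plus explicit contributions to the $\mathbf E\!:\!\nabla u\otimes\nabla u$ and $(\Delta u)^{2}$ slots. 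Likewise, substituting \eqref{invariant} into $E^{ij}u_{ij}$ produces $\operatorname{tr}\mathbf E^{2}$ plus a residual $|\nabla^{2}u|^{2}$ and further contributions to the same families.

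\textbf{The divergence and closure.} Computing $(\nabla^{i}E_{ij})\,u^{j}$ is the bulk of the work. Differentiating \eqref{invariant} and contracting with $u^{j}$, the Hessian piece gives $u^{j}\nabla_{j}\Delta u + \operatorname{Ric}(\nabla u,\nabla u)$ by the Ricci identity; the $u_{i}u_{j}/a(u)$ piece produces $|\nabla u|^{2}\Delta u/a$, $a'(u)|\nabla u|^{4}/a^{2}$ and the contraction $u_{ij}u^{i}u^{j}/a$; the $c\Delta u\,u_{i}u_{j}/|\nabla u|^{2}$ piece contributes $c\,u^{j}\nabla_{j}\Delta u$, a $(\Delta u)^{2}$ term, and a weighted Hessian contraction; the trace correction contributes $-\tfrac{1+c}{n}u^{j}\nabla_{j}\Delta u + \tfrac{1}{n}u^{j}\nabla_{j}(|\nabla u|^{2}/a)$. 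Every Hessian contraction $u_{ij}u^{i}u^{j}$ is then eliminated via the algebraic identity
\begin{equation*}
u_{ij}u^{i}u^{j} \;=\; E_{ij}u^{i}u^{j} + \frac{n-1}{n}\frac{|\nabla u|^{4}}{a(u)} + \frac{1-(n-1)c}{n}|\nabla u|^{2}\Delta u,
\end{equation*}
obtained by contracting \eqref{invariant} twice with $\nabla u$, so such contractions land in the $\mathbf E\!:\!\nabla u\otimes\nabla u$, $|\nabla u|^{4}/a^{2}$, and $(\Delta u)^{2}$ slots. The residual $|\nabla^{2}u|^{2}$ is eliminated via Bochner's formula $|\nabla^{2}u|^{2} = \tfrac12\Delta|\nabla u|^{2} - u^{j}\nabla_{j}\Delta u - \operatorname{Ric}(\nabla u,\nabla u)$, which exactly cancels the Ricci contribution produced above. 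Finally, differentiating \eqref{Meq-gradient} yields $u^{j}\nabla_{j}\Delta u = -f'(u)|\nabla u|^{2+q} - qf(u)|\nabla u|^{q-2}u^{j}u^{k}u_{kj}$, and $\Delta u = -f(u)|\nabla u|^{q}$ is substituted everywhere it still appears; regrouping produces \eqref{id}.

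\textbf{Main obstacle.} The computation is routine in principle but combinatorially delicate. The coefficients $\tfrac{n-1}{n}(a'(u)-\tfrac{n-2}{n})$ in front of $|\nabla u|^{4}/a^{2}$, $\tfrac{n-1}{n^{2}}(1+c)[q+(n-(n-1)q)c]$ in front of $(\Delta u)^{2}$, and the bracketed $\Delta u/|\nabla u|^{2}$-coefficient of $\mathbf E\!:\!\nabla u\otimes\nabla u$ appear only after several cancellations between contributions from Bochner, from differentiating $\mathbf E$, and from differentiating the PDE. The three scalars $\beta,c,d$ are intentionally kept free so that they can later be tuned---in the proofs of Theorems \ref{thm-subcritical} and \ref{thm-critical}---to force favourable signs or exact cancellations in the scalar slots of \eqref{id}.
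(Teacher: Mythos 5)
Your overall plan---expand $\operatorname{div}(u^{\beta}|\nabla u|^{d}\mathbf E\cdot\nabla u)$ by the product rule, compute $\nabla^{i}E_{ij}$ via the Ricci commutation identity, eliminate raw Hessian contractions by expressing $u_{ij}$ in terms of $E_{ij}$, and substitute the equation and its derivative---is exactly the route taken in the paper (which computes $E_{ij}u^{i}$, then $E_{ij},^{i}$, differentiates \eqref{Meq-gradient}, and assembles $(E_{ij}u^{j}),^{i}$). Your algebraic identity for $u_{ij}u^{i}u^{j}$ is also correct and matches the paper's intermediate formula for $E_{ij}u^{i}$.

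However, there is a genuine error in your handling of the $E^{ij}u_{ij}$ term and the Ricci curvature. You claim that substituting \eqref{invariant} into $E^{ij}u_{ij}$ produces ``$\operatorname{tr}\mathbf E^{2}$ plus a residual $|\nabla^{2}u|^{2}$''. This is false: writing $u_{ij}=E_{ij}+\frac{u_{i}u_{j}}{a}-c\frac{\Delta u}{|\nabla u|^{2}}u_{i}u_{j}+\frac{1}{n}\bigl((1+c)\Delta u-\frac{|\nabla u|^{2}}{a}\bigr)g_{ij}$ and contracting with $E^{ij}$ gives
\begin{equation*}
E^{ij}u_{ij}=\operatorname{tr}\mathbf E^{2}+\frac{E_{ij}u^{i}u^{j}}{a(u)}-c\,\frac{\Delta u}{|\nabla u|^{2}}E_{ij}u^{i}u^{j},
\end{equation*}
since $\operatorname{tr}\mathbf E=0$; there is no $|\nabla^{2}u|^{2}$ at all. (Only the opposite substitution, $E^{ij}\to u^{ij}-\cdots$, produces $|\nabla^{2}u|^{2}$, and then it does \emph{not} produce $\operatorname{tr}\mathbf E^{2}$; you cannot have both.) Consequently Bochner's formula is not needed anywhere in the derivation. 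Worse, if you carry out your stated plan and let the $-\operatorname{Ric}(\nabla u,\nabla u)$ from Bochner ``exactly cancel the Ricci contribution produced above'' (the $+\operatorname{Ric}(\nabla u,\nabla u)$ from $\nabla^{i}u_{ij}=\nabla_{j}\Delta u+R_{ij}u^{i}$), the final identity would have no $\operatorname{Ric}$ term, while \eqref{id} has $+\operatorname{Ric}(\nabla u,\nabla u)$ with coefficient one; you would also be left with a stray $\tfrac12\Delta|\nabla u|^{2}$ that appears nowhere in \eqref{id}. The correct bookkeeping is precisely what the paper does: the Ricci term enters once through $\nabla^{i}u_{ij}$ and simply survives to the end; $|\nabla^{2}u|^{2}$ never appears.
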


\begin{proof}
From the definition of \(E_{ij}\), we have
\[E_{ij}u^i=u_{ij}u^i-\frac{n-1}{n}\frac{|\nabla u|^2 u_j}{a(u)}+\frac{(n-1)c-1}{n}\Delta u u_j,\]
\begin{align*}
E_{ij},^i=~&\frac{n-1-c}{n}(\Delta u)_j+R_{ij}u^i+\frac{n-1}{n}\frac{a'(u)}{a^2(u)}|\nabla u|^2 u_j-\frac{n-2}{n}\frac{u_{ij}u^i}{a(u)}\\
&-\frac{\Delta u u_j}{a(u)}+c\frac{(\Delta u)^i}{|\nabla u|^2}u_i u_j+c\frac{(\Delta u)^2}{|\nabla u|^2}u_j+c\frac{\Delta uu_{ij}u^i}{|\nabla u|^2}-2c\frac{\Delta u u^{ik}u_i u_k u_j}{|\nabla u|^4}.
\end{align*}
Differentiating \eqref{Meq-gradient} yields 
\[(\Delta u)_i=\frac{f'(u)}{f(u)}\Delta u u_i +q\frac{\Delta u}{|\nabla u|^2}u_{ij} u^j.\] 
Replacing \(u_{ij}\) with \(E_{ij}\) gives
\begin{align*}
E_{ij},^i=~&-\frac{n-2}{n}\frac{E_{ij}u^i}{a(u)}+\big(\frac{n-1}{n}q+\frac{n-q}{n}c\big)\frac{\Delta u}{|\nabla u|^2}E_{ij}u^i-(2-q)c\frac{\Delta u E_{ik}u^i u^k u_j}{|\nabla u|^4}\\
&+\frac{n-1}{n}\big(a'(u)-\frac{n-2}{n}\big)\frac{|\nabla u|^2}{a^2(u)}u_j+\frac{n-1}{n^2}(1+c)[q+(n-(n-1)q)c]\frac{(\Delta u)^2}{|\nabla u|^2} u_j\\
&+\frac{n-1}{n}\big\{(1+c)\frac{f'(u)}{f(u)}-\big[1+\frac{2-(n-1)q}{n}(1+c)\big]\frac{1}{a(u)}\big\}\Delta u u_j+R_{ij}u^i.
\end{align*}
Then
\begin{align}\label{Eu-id}
(E_{ij}u^j),^i=~&E_{ij}E^{ij}+\frac{2}{n}\frac{E_{ij}u^iu^j}{a(u)}+\big(\frac{(n-1)(1+c)q}{n}-2c\big)\frac{\Delta uE_{ij}u^iu^j}{|\nabla u|^2}+R_{ij}u^iu^j\nonumber\\
&+\frac{n-1}{n}\big(a'(u)-\frac{n-2}{n}\big)\frac{|\nabla u|^4}{a^2(u)}+\frac{n-1}{n^2}(1+c)[q+(n-(n-1)q)c](\Delta u)^2\nonumber\\
&+\frac{n-1}{n}\big\{(1+c)\frac{f'(u)}{f(u)}-\big[1+\frac{2-(n-1)q}{n}(1+c)\big]\frac{1}{a(u)}\big\}\Delta u |\nabla u|^2.
\end{align}
A direct computation has
\begin{align}\label{Eu-id-1}
&u^{-\beta}|\nabla u|^{-d}(u^{\beta}|\nabla u|^dE_{ij} u^j),^i\nonumber\\
=~&\frac{\beta}{u}E_{ij}u^iu^j+(E_{ij}u^j),^i+\frac{d}{|\nabla u|^2}E_{ij}u^j\big(E^{ki}u_k+\frac{n-1}{n}\frac{|\nabla u|^2u^i}{a(u)}-\frac{(n-1)c-1}{n}\Delta uu^i\big).
\end{align}
Combining \eqref{Meq-gradient} and \eqref{Eu-id} with \eqref{Eu-id-1} and using \eqref{AB-cov}, a direct computation yields \eqref{id}.
\end{proof}

\begin{proposition}\label{prop-id}
Let \(a(u)=\big(\frac{n-2}{n}+\varepsilon\big)u\), \(\beta=-\frac{2+(n-1)d}{n-2+n\varepsilon}\), \(c=-\frac{q-\varepsilon}{n-(n-1)q}\), \(d=\frac{(n-1)(1+c)q-2nc}{(n-1)c-1}\) for sufficiently small \(\varepsilon>0\). Under the assumptions of Theorem \ref{thm-subcritical}, there exists \(\delta>0\) depending on \(n,q,\alpha_0,\varepsilon\), such that in \(Z^c\),
\begin{equation}\label{ineq-id}
\operatorname{div}(u^{\beta}|\nabla u|^d\mathbf E\cdot\nabla u)\geqslant \delta u^{\beta}|\nabla u|^d\Big(\frac{|\mathbf E\cdot\nabla u|^2}{|\nabla u|^2}+\frac{|\nabla u|^4}{u^2}+(\Delta u)^2\Big).  
\end{equation}
\end{proposition}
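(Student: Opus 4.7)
The plan is to substitute the prescribed $a$, $\beta$, $c$, $d$ into identity \eqref{id} and show every surviving term is either nonnegative or absorbable into the three model terms on the right-hand side of \eqref{ineq-id}. The parameters are engineered so that both coefficients of $\mathbf E:\nabla u\otimes\nabla u$ cancel exactly: the combination $\beta/u + [2+(n-1)d]/[na(u)] = u^{-1}\{\beta + [2+(n-1)d]/(n-2+n\varepsilon)\}$ vanishes by the definition of $\beta$, and $(n-1)(1+c)q/n - 2c - [(n-1)c-1]d/n$ vanishes by the definition of $d$. Thus after substitution only quadratic-type terms in $(\mathbf E,\nabla u,\Delta u)$ survive.

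Next I would check the sign of the remaining coefficients. With $c = -(q-\varepsilon)/[n-(n-1)q]$ one computes $1+c = [n(1-q)+\varepsilon]/[n-(n-1)q] > 0$ and $q+(n-(n-1)q)c = \varepsilon$, so the $(\Delta u)^2$ coefficient equals $(n-1)(1+c)\varepsilon/n^2 > 0$. Since $a'(u) - (n-2)/n = \varepsilon$, the coefficient of $|\nabla u|^4/a^2(u)$ is a positive multiple of $\varepsilon$ and hence yields a positive multiple of $|\nabla u|^4/u^2$. The Ricci term is nonnegative by hypothesis and can be discarded.

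The delicate step is the $|\nabla u|^{2+q}$ term. Using $uf'(u)\leq\alpha_0 f(u)$ and $1+c>0$, I would bound
$\Phi(u) := (1+c)f'(u) - [1 + (2-(n-1)q)(1+c)/n]\,f(u)/a(u)$ above by $(f(u)/u)\,B(\varepsilon)$, where direct algebra---collecting the numerator and invoking the identity $n+2-2nq+(n-1)q^2 = (n-2)(1-q)(2^*(q)-1)$---produces $B(0) = n(1-q)[\alpha_0-(2^*(q)-1)]/[n-(n-1)q]$. The hypothesis $\alpha_0 < 2^*(q)-1$ makes $B(0) < 0$ strictly, and by continuity $B(\varepsilon) < 0$ for small $\varepsilon$. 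Rewriting $|\nabla u|^{2+q} = (-\Delta u)|\nabla u|^2/f(u)$ via equation \eqref{Meq-gradient}, the $|\nabla u|^{2+q}$ term contributes a nonnegative multiple of $(-\Delta u)|\nabla u|^2/u$, which is nonnegative and may be dropped.

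Finally, for the quadratic-in-$\mathbf E$ piece $|\mathbf E|^2 + d|\mathbf E\cdot\nabla u|^2/|\nabla u|^2$, I would invoke Lemma \ref{lem-positivity}. Evaluating at $\varepsilon=0$ gives $d = -q[n+1-(n-1)q]$; on $q\in(0,1/(n-1)]$ the magnitude $|d| = q[n+1-(n-1)q]$ is strictly increasing and attains $n/(n-1)$ precisely at the right endpoint, so the strict hypothesis $q<1/(n-1)$ ensures $|d|<n/(n-1)$, a bound that persists for small $\varepsilon$. Lemma \ref{lem-positivity} then yields $|\mathbf E|^2 + d|\mathbf E\cdot\nabla u|^2/|\nabla u|^2 \geq (n/(n-1)-|d|)\,|\mathbf E\cdot\nabla u|^2/|\nabla u|^2 > 0$. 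Taking $\delta$ to be the minimum of these three positive coefficients and multiplying back by $u^\beta|\nabla u|^d$ yields \eqref{ineq-id}. The main obstacle is coordinating the parameter algebra so that the two cancellations, three positivity conditions, and the critical-exponent inequality $\alpha_0 < 2^*(q)-1$ all emerge simultaneously from the single perturbation $\varepsilon$; the threshold $q = 1/(n-1)$ is exactly where the $\mathbf E$-quadratic estimate degenerates, explaining the range restriction on $q$.
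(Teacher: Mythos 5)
Your argument is correct and matches the paper's proof in every essential respect: substitute the prescribed parameters into \eqref{id} so the two $\mathbf E:\nabla u\otimes\nabla u$ coefficients cancel, use Lemma \ref{lem-positivity} with $\tfrac{n}{n-1}+d>0$ (which requires $q<\tfrac{1}{n-1}$) for the $\mathbf E$-quadratic piece, observe the $(\Delta u)^2$ and $|\nabla u|^4/u^2$ coefficients are positive multiples of $\varepsilon$, drop the nonnegative Ricci term, and use $uf'(u)\leqslant\alpha_0 f(u)$ with $\alpha_0<2^*(q)-1$ plus continuity in $\varepsilon$ to show the $|\nabla u|^{2+q}$ term is nonnegative. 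Your derivation is somewhat more explicit than the paper's (spelling out $B(\varepsilon)$, the identity $n+2-2nq+(n-1)q^2=(n-2)(1-q)(2^*(q)-1)$, and the monotonicity of $|d|$ in $q$), but the route is the same.
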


\begin{remark}
Selections in this proposition ensure that coefficients of all \(\mathbf E:\nabla u\otimes\nabla u\) terms in \eqref{id} vanish, and a little \(\frac{|\nabla u|^4}{a^2(u)}\) and \((\Delta u)^2\) are retained.
\end{remark}

\begin{proof}
According to Lemma \ref{lem-positivity} and \(\operatorname{Ric}\geqslant0\), identity \eqref{id} yields
\begin{align*}
&u^{-\beta}|\nabla u|^{-d}(u^{\beta}|\nabla u|^dE_{ij} u^j),^i\\
\geqslant~&\big(\frac{n}{n-1}+d\big)\frac{E_{ij}u^jE^{ik}u_k}{|\nabla u|^2}+\frac{n(n-1)\varepsilon}{(n-2+n\varepsilon)^2}\frac{|\nabla u|^4}{u^2}+\frac{n-1}{n^2}\frac{n(1-q)+\varepsilon}{n-(n-1)q}\varepsilon(\Delta u)^2\\
&-\frac{n-1}{n}\frac{n(1-q)+\varepsilon}{n-(n-1)q}\big[f'(u)-\big(\frac{1}{1+c}+\frac{2-(n-1)q}{n}\big)\frac{f(u)}{a(u)}\big]|\nabla u|^{2+q}.
\end{align*}
Note that 
\[\lim_{\varepsilon\to0^+}\big(\frac{n}{n-1}+d\big)=[n-(n-1)q](\frac{1}{n-1}-q)>0.\]
By the subcritical condition of Theorem \ref{thm-subcritical}, it yields
\begin{align*}
&\lim_{\varepsilon\to 0^+}\big[f'(u)-\big(\frac{1}{1+c}+\frac{2-(n-1)q}{n}\big)\frac{f(u)}{a(u)}\big]\\
=~&f'(u)-[2^*(q)-1]\frac{f(u)}{u}\leqslant[\alpha_0-2^*(q)+1]\frac{f(u)}{u}<0.
\end{align*}
This completes the proof.
\end{proof}

\textbf{Proof of Theorem \ref{thm-subcritical}.}
Testing inequality \eqref{ineq-id} with \(\eta^\gamma\), where \(\gamma>0\) is a sufficiently large constant independent of \(R\), yields
\begin{align}\label{ineq-sub-0}
&\int_{Z^c}u^{\beta}|\nabla u|^d\Big(\frac{|\mathbf E\cdot\nabla u|^2}{|\nabla u|^2}+\frac{|\nabla u|^4}{u^2}+(\Delta u)^2\Big)\eta^\gamma\nonumber\\
\leqslant ~& C\int_{\partial Z\cap B_{2R}} u^{\beta} |\nabla u|^d |\mathbf E:\nabla u\otimes\nu|\eta^\gamma +C\int_{Z^c\cap B_{2R}}u^{\beta}|\nabla u|^d|\mathbf E:\nabla u\otimes\nabla\eta|\eta^{\gamma-1},  
\end{align}
where \(\nu\) is the unit outer normal vector field on \(\partial Z\).

Consider the first term of the RHS of inequality \eqref{ineq-sub-0}. By the definition of \(E_{ij}\), we have
\begin{align}\label{ineq-sub-1}
&\int_{\partial Z\cap B_{2R}}u^{\beta}|\nabla u|^d|\mathbf E:\nabla u\otimes\nu|\eta^\gamma\nonumber\\
\leqslant~&C\int_{\partial Z\cap B_{2R}}u^{\beta}|\nabla u|^d\big(|\nabla^2 u:\nabla u\otimes\nu|+\frac{|\nabla u|^3}{a(u)}+f(u)|\nabla u|^{q+1}\big)\eta^{\gamma}.
\end{align}
From the selection of \(d\) in Proposition \ref{prop-id}, it holds
\[\lim_{\varepsilon\to0^+}d+q+1=1-q[n-(n-1)q]>0,\]
\[\lim_{\varepsilon\to0^+}d-q+2=\lim_{\varepsilon\to0^+}d+q+1+(1-2q)\geqslant\lim_{\varepsilon\to0^+}d+q+1+\frac{n-3}{n-1}>0.\]
Therefore, on \(Z\),
\[|\nabla u|^{d+q+1}=|\nabla u|^{d+3}=|\nabla u|^{d-q+2}=0.\] 
Since \(\nabla\frac{\Delta u}{f(u)}\) is bounded on \(B_{2R}\) due to \(u\in C^3(M^n)\), we have
\[|\nabla u|^d|\nabla^2 u:\nabla u\otimes\nu|\leqslant\frac 1 q|\nabla u|^{d-q+2}\big|\nabla|\nabla u|^q\big|=\frac 1 q|\nabla u|^{d-q+2}\big|\nabla\frac{\Delta u}{f(u)}\big|=0\]
on \(Z\cap B_{2R}\). On the other hand, \(\partial Z\subset Z\) by the closedness of \(Z\), thus
\begin{align}\label{ineq-sub-2}
\int_{\partial Z\cap B_{2R}}u^{\beta}|\nabla u|^d|\mathbf E:\nabla u\otimes\nu|\eta^\gamma\leqslant0.
\end{align}

Now, we focus on the second term of the RHS of inequality \eqref{ineq-sub-0}. Let \(\mathcal X_1:=\{u<N\}\) and \(\mathcal X_2:=\{u\geqslant N\}\), then \(f(u)\geqslant Cu^{\alpha_0}\) on \(\mathcal X_1\), and \(f(u)\geqslant u^{\beta_0}\) on \(\mathcal X_2\). By Young's inequality, it holds
\begin{align}\label{ineq-sub-Young-1}
&\int_{Z^c\cap B_{2R}\cap\mathcal X_1}u^{\beta}|\nabla u|^d|\mathbf E:\nabla u\otimes\nabla\eta|\eta^{\gamma-1}\nonumber\\
\leqslant~&\varepsilon\int_{Z^c}u^{\beta}|\nabla u|^d\Big(\frac{|\mathbf E\cdot\nabla u|^2}{|\nabla u|^2}+\frac{|\nabla u|^4}{u^2}+(\Delta u)^2\Big)\eta^\gamma\nonumber\\
&+C\int_{Z^c\cap\mathcal X_1} u^{\frac{2a_3}{a_4}+\beta}[f(u)]^{-\frac{2a_2}{a_4}}\eta^{\gamma-\frac{1}{a_4}}|\nabla\eta|^{\frac{1}{a_4}}, 
\end{align}
where \(a_2=\frac{1-(d+4)a_4}{2(2-q)}\) and \(a_3=\frac{(d+2q)a_4-q+1}{2(2-q)}\) satisfy the condition
\begin{equation}\label{cond-sub-Young}
a_2,a_3,a_4>0,~a_4<\frac 1 n.
\end{equation}
Noting that
\begin{align*}
&\lim_{\varepsilon\to0^+}d=-q[n+1-(n-1)q],\quad
\lim_{\varepsilon\to0^+}d+4>0,\\
&\lim_{\varepsilon\to0^+}d+2q=-q(1-q)(n-1)<0 ,\\
&\lim_{\varepsilon\to0^+}\big(\frac{1}{d+4}+\frac{1-q}{d+2q}\big)=\lim_{\varepsilon \to 0^+}\frac{(2-q)(1-q)[2-(n-1)q]}{(d+4)(d+2q)}<0.
\end{align*}
Then \eqref{cond-sub-Young} is equivalent to \(0<a_4<\min\big\{\frac 1 n,\frac{1}{d+4}\big\}\) when \(\varepsilon>0\) is small enough. Thus,
\begin{equation}\label{cond-sub}
0<a_4<\begin{cases}
\frac 1 n,&n\geqslant4,\\
\min\big\{\frac 1 3,\frac{1}{2q^2-4q+4}\big\},&n=3,
\end{cases}
\end{equation}
leads to the conclusion that \eqref{cond-sub-Young} holds. 
    
On \(\mathcal X_1\), it holds
\[u^{\frac{2a_3}{a_4}+\beta}[f(u)]^{-\frac{2a_2}{a_4}}\leqslant Cu^{\frac{2a_3}{a_4}-\frac{2+(n-1)d}{n-2+n\varepsilon}-\frac{2a_2}{a_4}\alpha_0},\] 
and 
\begin{align*}
\frac{2a_3}{a_4}-\frac{2+(n-1)d}{n-2+n\varepsilon}-\frac{2a_2}{a_4}\alpha_0>~&\frac{2a_3}{a_4}-\frac{2+(n-1)d}{n-2+n\varepsilon}-\frac{2a_2}{a_4}[2^*(q)-1]\\
\to~&\frac{(2-q)[(n+1-(n-1)q)a_4-1]}{(n-2)(1-q)a_4}\ (\varepsilon\to0^+).
\end{align*}
When \(n\geqslant3\), 
\[\lim_{a_4\to(\frac 1 n)^-}[n+1-(n-1)q]a_4-1=\frac{1-(n-1)q}{n}>0.\]
When \(n=3\), \[\lim_{a_4\to(\frac{1}{2q^2-4q+4})^-}(4-2q)a_4-1=\frac{q(1-q)}{q^2-2q+2}>0.\]
Choosing \(a_4\) sufficiently close to the upper bound in \eqref{cond-sub}, we have
\[\frac{2a_3}{a_4}-\frac{2+(n-1)d}{n-2+n\varepsilon}-\frac{2a_2}{a_4}\alpha_0>0.\]
For a fixed \(\gamma>\frac{1}{a_4}\), by the Bishop-Gromov volume comparison theorem, we obtain \[\int_{Z^c\cap\mathcal X_1} u^{\frac{2a_3}{a_4}+\beta}[f(u)]^{-\frac{2a_2}{a_4}}\eta^{\gamma-\frac{1}{a_4}}|\nabla\eta|^{\frac{1}{a_4}}\leqslant CR^{n-\frac{1}{a_4}}.\]
Substituting it into \eqref{ineq-sub-Young-1}, one has
\begin{align}\label{ineq-sub-Young-1-1}
&\int_{Z^c\cap B_{2R}\cap\mathcal X_1}u^{\beta}|\nabla u|^d|\mathbf E:\nabla u\otimes\nabla\eta|\eta^{\gamma-1}\nonumber\\
\leqslant~&\varepsilon\int_{Z^c}u^{\beta}|\nabla u|^d\Big(\frac{|\mathbf E\cdot\nabla u|^2}{|\nabla u|^2}+\frac{|\nabla u|^4}{u^2}+(\Delta u)^2\Big)\eta^\gamma+CR^{n-\frac{1}{a_4}}.
\end{align}

Similarly, for \(b_2=\frac{1-(d+4)b_4}{2(2-q)}\), \(b_3=\frac{(d+2q)b_4-q+1}{2(2-q)}\) and \(b_4\) satisfying \eqref{cond-sub}, we have
\begin{align}\label{ineq-sub-Young-2}
&\int_{Z^c\cap B_{2R}\cap\mathcal X_2}u^{\beta}|\nabla u|^d|\mathbf E:\nabla u\otimes\nabla\eta|\eta^{\gamma-1}\nonumber\\
\leqslant~&\varepsilon \int_{Z^c}u^{\beta}|\nabla u|^d\Big(\frac{|\mathbf E\cdot\nabla u|^2}{|\nabla u|^2}+\frac{|\nabla u|^4}{u^2}+(\Delta u)^2\Big)\eta^\gamma\nonumber\\
&+C\int_{Z^c\cap\mathcal X_2}u^{\frac{2b_3}{b_4}+\beta}[f(u)]^{-\frac{2b_2}{b_4}}\eta^{\gamma-\frac{1}{b_4}}|\nabla\eta|^{\frac{1}{b_4}}.
\end{align}
On \(\mathcal X_2\), 
\[u^{\frac{2b_3}{b_4}+\beta}[f(u)]^{-\frac{2b_2}{b_4}}\leqslant Cu^{\frac{2b_3}{b_4}-\frac{2+(n-1)d}{n-2+n\varepsilon}-\frac{2b_2}{b_4}\beta_0}.\]
Since \(\forall\beta_0>1-q\), it holds
\begin{align*}
\frac{2b_3}{b_4}-\frac{2+(n-1)d}{n-2+n\varepsilon}-\frac{2b_2}{b_4}\beta_0=\frac{1-q-\beta_0}{2-q}\frac{1}{b_4}+O(1)\ (b_4\to0^+).
\end{align*}
Letting \(b_4\) close to \(0\), one has
\[\frac{2b_3}{b_4}-\frac{2+(n-1)d}{n-2+n\varepsilon}-\frac{2b_2}{b_4}\beta_0<0.\]
Taking \(\gamma=\frac{1}{b_4}>\frac{1}{a_4}\) and \eqref{ineq-sub-Young-2}, according to the Bishop-Gromov volume comparison theorem, we obtain
\begin{align}\label{ineq-sub-Young-2-1}
&\int_{Z^c\cap B_{2R}\cap\mathcal X_2}u^{\beta}|\nabla u|^d|\mathbf E:\nabla u\otimes\nabla\eta|\eta^{\gamma-1}\nonumber\\
\leqslant~&\varepsilon\int_{Z^c}u^{\beta}|\nabla u|^d\Big(\frac{|\mathbf E\cdot\nabla u|^2}{|\nabla u|^2}+\frac{|\nabla u|^4}{u^2}+(\Delta u)^2\Big)\eta^\gamma+CR^{n-\frac{1}{b_4}}.
\end{align}
Hence, when \(\gamma=\frac{1}{b_4}\), by substituting \eqref{ineq-sub-2}, \eqref{ineq-sub-Young-1-1} and \eqref{ineq-sub-Young-2-1} into \eqref{ineq-sub-0}, it yields
\[\int_{Z^c\cap B_R}u^{\beta-2}|\nabla u|^{d+4}\leqslant CR^{n-\frac{1}{b_4}}\to0~(R\to\infty),\]
which means that \(Z^c\) is null, i.e., \(|\nabla u|\equiv0\). Therefore, \(u\) must be constant. 
\qed
\section{The second critical case}\label{sec-critical}

In this section, we focus on the second critical case and prove Theorem \ref{thm-critical}.

We assume that \(u\) is nontrivial. Consider \(f(u)=u^{2^*(q)-1}\), and all notation follows from Section \ref{sec-subcritical}. Argument as in Section \ref{sec-subcritical}, the invariant tensor reduces to
\[E_{ij}=u_{ij}-\frac{n}{n-2}\frac{u_i u_j}{u}-\frac{q}{n-(n-1)q}\frac{\Delta u}{|\nabla u|^2}u_i u_j-\big(\frac{1-q}{n-(n-1)q}\Delta u -\frac{1}{n-2}\frac{|\nabla u|^2}{u}\big)g_{ij}.\]
In this time,
\begin{align}\label{MEiji}
E_{ij},^i=-\frac{E_{ij}u^i}{u}+\frac{q(n-2)(1-q)}{n-(n-1)q}\frac{\Delta u}{|\nabla u|^2}E_{ij}u^i+\frac{(2-q)q}{n-(n-1)q}\frac{\Delta u E_{ik}u^i u^k u_j}{|\nabla u|^4}+R_{ij}u^i.
\end{align}
Define the pseudo-invariant function as
\[l=u^{-2_*(q)}|\nabla u|^{2-q}+\frac{2-q}{2^*(q)}u^{\frac{2-q}{(n-2)(1-q)}},\]
which satisfies \(l_i=(2-q)u^{-2_*(q)}|\nabla u|^{-q} E_{ij}u^j\), meaning that \(l_i\) is a multiple of the invariant tensor.

In the second critical case, applying the selection from Proposition \ref{prop-id} with \(\varepsilon=0\) to identity \eqref{id}, and inserting powers of \(l\), we obtain the following proposition.
\begin{proposition}
Let \(0\leqslant\mu<\frac{n-(n-1)q}{2-q}\big(\frac{1}{n-1}-q\big)\). Under the assumptions of Theorem \ref{thm-critical}, there exists a constant \(\delta>0\) depending on \(n,q,\mu\) such that 
\begin{align}\label{ineq-critical-0}
&\operatorname{div}\big(u^{\beta'}|\nabla u|^{(n-1)q^2-(n+1)q}l^{-\mu}\mathbf E\cdot\nabla u\big)
\geqslant\delta u^{\beta'}|\nabla u|^{(n-1)q^2-(n+1)q}l^{-\mu}|\mathbf E|^2
\end{align}
on \(Z^c\), where \(\beta'=-\frac{(n-1)^2q^2-(n^2-1)q+2}{n-2}\).
\end{proposition}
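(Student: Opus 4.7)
The plan is to specialise the master divergence identity \eqref{id} to the second critical case and then incorporate the weight $l^{-\mu}$ via the Leibniz rule. Using the parameter choices of Proposition \ref{prop-id} at $\varepsilon=0$, namely $a(u)=\tfrac{n-2}{n}u$, $c=-\tfrac{q}{n-(n-1)q}$, $d=(n-1)q^2-(n+1)q$ and $\beta'=-\tfrac{(n-1)^2q^2-(n^2-1)q+2}{n-2}$, three separate cancellations occur at criticality: the coefficient $a'(u)-\tfrac{n-2}{n}$ of the $|\nabla u|^4/a(u)^2$ term vanishes; the coefficient $(1+c)[q+(n-(n-1)q)c]$ of the $(\Delta u)^2$ term vanishes; and with $f(u)=u^{2^*(q)-1}$ the bracket $(1+c)f'(u)-[1+\tfrac{2-(n-1)q}{n}(1+c)]f(u)/a(u)$ reduces to $f'(u)-(2^*(q)-1)f(u)/u=0$. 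Combined with the vanishing of the $\mathbf E:\nabla u\otimes\nabla u$ coefficients already arranged in Proposition \ref{prop-id}, identity \eqref{id} collapses to
\begin{equation*}
u^{-\beta'}|\nabla u|^{-d}\operatorname{div}\bigl(u^{\beta'}|\nabla u|^d\mathbf E\cdot\nabla u\bigr)=|\mathbf E|^2+\tfrac{d}{|\nabla u|^2}|\mathbf E\cdot\nabla u|^2+\operatorname{Ric}(\nabla u,\nabla u).
\end{equation*}

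Next I would apply $\operatorname{div}(l^{-\mu}X)=l^{-\mu}\operatorname{div}X-\mu l^{-\mu-1}\langle\nabla l,X\rangle$ with $X:=u^{\beta'}|\nabla u|^d\mathbf E\cdot\nabla u$. The defining property $\nabla l=(2-q)u^{-2_*(q)}|\nabla u|^{-q}\mathbf E\cdot\nabla u$ immediately gives $\langle\nabla l,X\rangle=(2-q)u^{\beta'-2_*(q)}|\nabla u|^{d-q}|\mathbf E\cdot\nabla u|^2$. The crux is to reabsorb $l^{-\mu-1}$ into $l^{-\mu}$: since both summands of $l$ are positive, the trivial lower bound $l\geqslant u^{-2_*(q)}|\nabla u|^{2-q}$ gives $l^{-\mu-1}\leqslant l^{-\mu}u^{2_*(q)}|\nabla u|^{-(2-q)}$, so the correction term is dominated in absolute value by $\mu(2-q)l^{-\mu}u^{\beta'}|\nabla u|^{d-2}|\mathbf E\cdot\nabla u|^2$. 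Dropping the nonnegative Ricci contribution then yields
\begin{equation*}
\operatorname{div}\bigl(u^{\beta'}|\nabla u|^d l^{-\mu}\mathbf E\cdot\nabla u\bigr)\geqslant l^{-\mu}u^{\beta'}|\nabla u|^d\Bigl(|\mathbf E|^2+\tfrac{d-\mu(2-q)}{|\nabla u|^2}|\mathbf E\cdot\nabla u|^2\Bigr).
\end{equation*}

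To close, I observe that $d<0$ for $0<q<\tfrac{1}{n-1}$ and $\mu\geqslant0$, so $d-\mu(2-q)<0$, and Lemma \ref{lem-positivity} applies in the useful direction: $|\mathbf E\cdot\nabla u|^2/|\nabla u|^2\leqslant\tfrac{n-1}{n}|\mathbf E|^2$. Hence the bracket is at least $\bigl(1+\tfrac{(n-1)(d-\mu(2-q))}{n}\bigr)|\mathbf E|^2$. Strict positivity of this coefficient is equivalent to $\mu(2-q)<d+\tfrac{n}{n-1}$, and the elementary factorisation $d+\tfrac{n}{n-1}=\tfrac{(n-1)^2q^2-(n^2-1)q+n}{n-1}=(n-(n-1)q)\bigl(\tfrac{1}{n-1}-q\bigr)$ shows that this is exactly the admissible range $\mu<\tfrac{n-(n-1)q}{2-q}\bigl(\tfrac{1}{n-1}-q\bigr)$ from the hypothesis. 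The main delicate point is precisely this algebraic match between the Lemma \ref{lem-positivity} threshold and the explicit upper bound on $\mu$; granting it, setting $\delta:=1+(n-1)(d-\mu(2-q))/n>0$ completes the proof.
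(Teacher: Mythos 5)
Your proof is correct and takes essentially the same route as the paper's: specialize the master identity \eqref{id} at the critical parameter values (where the $|\nabla u|^4/a^2$, $(\Delta u)^2$ and $|\nabla u|^{2+q}$ coefficients all vanish and $d=-q[n+1-(n-1)q]$), incorporate $l^{-\mu}$ by the Leibniz rule using $\nabla l=(2-q)u^{-2_*(q)}|\nabla u|^{-q}\mathbf E\cdot\nabla u$, absorb the $l^{-\mu-1}$ factor via the trivial lower bound $l\geqslant u^{-2_*(q)}|\nabla u|^{2-q}$, and finish with Lemma \ref{lem-positivity} together with the identity $d+\tfrac{n}{n-1}=[n-(n-1)q]\bigl(\tfrac{1}{n-1}-q\bigr)$. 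The only cosmetic difference is that the paper writes the absorption step as a bound $\tfrac{u^{-2_*(q)}|\nabla u|^{2-q}}{l}\leqslant1$ inside the bracket, rather than pulling the correction term outside; the arithmetic is identical.
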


\begin{proof}
Under the selection from Proposition \ref{prop-id} and taking \(\varepsilon=0\), identity \eqref{id} reduces to
\begin{align}\label{cri-id}
&u^{-\beta'}|\nabla u|^{(n+1)q-(n-1)q^2}\big(u^{\beta'}|\nabla u|^{(n-1)q^2-(n+1)q}E_{ij}u^j\big),^i\nonumber\\
=~&E_{ij}E^{ij}-q[n+1-(n-1)q]\frac{E_{ij}u^jE^{ik}u_k}{|\nabla u|^2}+R_{ij}u^iu^j.
\end{align}
Introducing \(l^{-\mu}\) into the vector field, by the definition of \(l\) and Lemma \ref{lem-positivity}, we obtain
\begin{align*}
&u^{-\beta'}|\nabla u|^{(n+1)q-(n-1)q^2}l^\mu\big(u^{\beta'}|\nabla u|^{(n-1)q^2-(n+1)q}l^{-\mu}E_{ij}u^j\big),^i\\
=~&u^{-\beta'}|\nabla u|^{(n+1)q-(n-1)q^2}\big(u^{\beta'}|\nabla u|^{(n-1)q^2-(n+1)q}E_{ij}u^j\big),^i-\frac{\mu}{l}E_{ij}u^jl^i\\
=~&E_{ij}E^{ij}-\big[q[n+1-(n-1)q]+\mu(2-q)\frac{u^{-2_*(q)}|\nabla u|^{2-q}}{l}\big]\frac{1}{|\nabla u|^2}E_{ij}u^jE^{ik}u_k+R_{ij}u^iu^j\\
\geqslant~&\frac{n-1}{n}\big[[n-(n-1)q](\frac{1}{n-1}-q)-\mu(2-q)\big]E_{ij}E^{ij}.
\end{align*}
\end{proof}

The following two lemmas hold for all \(n\geqslant3\).

\begin{lemma}\label{h-pro}
Let \(h:\mathbb R_+\to\mathbb R_+\) and \(v:\mathbb R^n\to\mathbb R_+\) be two \(C^{2}\) functions. If \[\mathbf F:=\nabla^2 v+\frac{h''(v)}{h'(v)}\nabla v\otimes\nabla v-\frac{1}{n}(\Delta v+\frac{h''(v)}{h'(v)}|\nabla v|^2)g\equiv0,\]
then \(h(v)=c_1|x-x_0|^2+c_2\) for some \(x_0\in\mathbb R^n\) and positive constants \(c_1,c_2\).
\end{lemma}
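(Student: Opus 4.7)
The plan is to recognize $\mathbf F$ as the traceless Hessian of $w := h(v)$ and then apply the standard Euclidean rigidity for functions with pointwise conformal Hessian. Since $\nabla w = h'(v)\nabla v$, a direct computation gives
\[
\nabla^2 w = h'(v)\bigl[\nabla^2 v + \tfrac{h''(v)}{h'(v)}\nabla v\otimes\nabla v\bigr],
\]
so the hypothesis $\mathbf F\equiv 0$ is exactly equivalent to the vanishing of the trace-free part of $\nabla^2 w$. Setting $\phi := \Delta w/n$, this reads
\[
\nabla^2 w = \phi\, g.
\]

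Next I would show $\phi$ is constant. From $w_{ij} = \phi\,\delta_{ij}$ and the commutativity $w_{ijk} = w_{ikj}$ on $\mathbb{R}^n$, one derives $\phi_k\delta_{ij} = \phi_j\delta_{ik}$; fixing $j = i \ne k$ gives $\phi_k = 0$, which uses only $n \geqslant 2$. Hence $\phi$ is a constant $\lambda$, and $w - \tfrac{\lambda}{2}|x|^2$ has vanishing Hessian, so it is affine:
\[
w(x) = \tfrac{\lambda}{2}|x|^2 + a\cdot x + b,\qquad a\in\mathbb{R}^n,\ b\in\mathbb{R}.
\]

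The final step uses the positivity of $w = h(v)$ on all of $\mathbb R^n$ to extract the stated form with $c_1,c_2>0$. If $\lambda<0$ then $w\to-\infty$ at infinity, and if $\lambda=0$ with $a\ne 0$ the affine function $w$ changes sign, both contradicting $w>0$. Hence the only non-degenerate case is $\lambda>0$, and completing the square yields
\[
w(x) = \tfrac{\lambda}{2}\bigl|x-x_0\bigr|^2 + c_2,\qquad x_0 = -a/\lambda,\ c_2 = b - |a|^2/(2\lambda),
\]
with $c_2 = w(x_0) = h(v(x_0)) > 0$. Taking $c_1 := \lambda/2$ gives the claimed conclusion.

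I do not expect any real obstacle: the entire content of the lemma lies in the first identification, which reduces the problem to the elementary Euclidean rigidity that $\nabla^2 w \propto g$ forces $w$ to be a quadratic polynomial of the special form $c_1|x-x_0|^2 + c_2$. The only care is needed in the last step to rule out the degenerate affine case, which is precisely where the positivity of the range of $h$ is used.
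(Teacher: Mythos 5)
Your proof is correct and takes a genuinely cleaner route than the paper's. Both arguments begin from the same algebraic observation that $\mathbf F = [h'(v)]^{-1}\bigl(\nabla^2 w - \tfrac{\Delta w}{n}g\bigr)$ with $w = h(v)$, but they diverge afterward. The paper works componentwise: the vanishing of off-diagonal entries gives $\partial_{ij}w = 0$ for $i\neq j$, forcing $w$ to separate as $\sum_i h_i(x_i)$ up to a constant, and then the equality of diagonal entries forces each $h_i''$ to be the same constant. You instead invoke the standard Schur-type commutation argument: from $\nabla^2 w = \phi g$ and $w_{ijk} = w_{ikj}$ one gets $\phi_k\delta_{ij} = \phi_j\delta_{ik}$, hence $\nabla\phi = 0$ once $n\geqslant 2$, and $w$ is an explicit quadratic. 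Your route is more conceptual, making the link to the Euclidean ``umbilical Hessian'' rigidity explicit, and it also handles a point the paper glosses over: you actually argue why $\lambda = c_1 > 0$ must hold by using the positivity of the range of $h$, whereas the paper simply asserts the form with positive $c_1,c_2$ without justification. One residual degenerate case neither proof addresses explicitly is $\lambda = 0$ with $a = 0$, where $w$ is a positive constant and the conclusion as stated fails; in the paper's application (Lemma \ref{critical-thm-pf}), $h'\neq 0$ and $v=h(u)$ is assumed nontrivial, which excludes this case, but strictly speaking the lemma as stated should carry a nondegeneracy hypothesis. This is a shared shortcoming, not a defect of your argument relative to theirs.
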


\begin{proof}
For \(i\neq j\), \(F_{ij}=[h'(v)]^{-1}\partial_{ij}[h(v)]=0\), implying \(\partial_{ij}[h(v)]=0\). Therefore, for any \(1\leqslant i \leqslant n\), \(\partial_{i}[h(v)]\) depends only on \(x_i\), and it follows that for some \(h_i\in C^2(\mathbb R)\),
\[h(v)=h_i(x_i)+w_i(x_1, x_2, \dots,\hat{x}_i,\dots, x_n),\]
where \(w_i\) is independent of \(x_i\). Therefore, \(h(v)-\sum_{i=1}^n h_i(x_i)\) is constant.
    
For a fixed \(i\), we know that
\[F_{ii}=[h'(v)]^{-1}\partial_{ii}[h(v)]-\frac{1}{n}(\Delta v+\frac{h''(v)}{h'(v)}|\nabla v|^2)=0.\] 
It implies \(\partial_{ii}[h(v)]=\partial_{jj}[h(v)]\), that is, \(h_i''(x_i)=h_j''(x_j)\). Hence \(h_i''(x_i)\) is a constant independent of \(x_i\), i.e. \(h''(v)=2c_1\). And then, for some suitable positive constant \(c_2\), we have
\[h(v)=c_1|x-x_0|^2+c_2\]
 for some \(x_0\in\mathbb R^n.\)
\end{proof}

\begin{lemma}\label{critical-thm-pf}
Under the assumption of Theorem \ref{thm-critical}, if \(\mathbf E=0\), then \((M^n, g)\) is isometric to \(\mathbb R^n\) and \(u(x)\) must be of the form \eqref{solution}.
\end{lemma}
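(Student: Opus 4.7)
The plan has two main stages: first, upgrade the vanishing of $\mathbf{E}$ to the rigidity $(M^n, g) \cong \mathbb{R}^n$; second, exploit $\mathbf{E} = 0$ on $\mathbb{R}^n$ via Lemma \ref{h-pro} to identify $u$ explicitly. For the first stage I would start from the relation $l_i = (2-q)u^{-2_*(q)}|\nabla u|^{-q} E_{ij}u^j$ stated just before the proposition: under $\mathbf{E} \equiv 0$ this gives $l_i \equiv 0$ on $Z^c$, so by connectedness the pseudo-invariant $l$ is a global constant, yielding the first integral
\[|\nabla u|^{2-q} = l\, u^{2_*(q)} - \frac{2-q}{2^*(q)}\, u^{2^*(q)},\]
which expresses $|\nabla u|^2$ as a function of $u$ alone. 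Furthermore, revisiting the divergence computation \eqref{cri-id} shows that the estimate of the proposition discarded only the nonnegative term $\operatorname{Ric}(\nabla u, \nabla u)$; consequently $\mathbf{E} \equiv 0$ forces $\operatorname{Ric}(\nabla u, \nabla u) \equiv 0$ on $Z^c$ as well.

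The identity $\mathbf{E} = 0$ says the Hessian of $u$ is a two-eigenvalue tensor, so the level sets of $u$ are totally umbilic with mean curvature depending only on $u$; together with $|\nabla u|^2 = F(u)$, the vanishing of $\operatorname{Ric}$ along $\nabla u$, and the standing $\operatorname{Ric} \geqslant 0$ hypothesis, I expect a geometric argument in the spirit of \cite{CM2022, CFP2024} to promote these to Ricci-flatness of $(M^n, g)$, turn $\nabla u / |\nabla u|$ into a global unit geodesic field, and realise the level sets as round spheres. A Bishop-Gromov equality / splitting-type conclusion then identifies $(M^n,g)$ with flat $\mathbb{R}^n$; this is the step that actively uses $n \in \{3, 4, 5\}$.

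Once on $\mathbb{R}^n$, the PDE $-\Delta u = u^{2^*(q)-1}|\nabla u|^q$ combined with $|\nabla u|^2 = F(u)$ turns $\Delta u$ into a function of $u$, hence the coefficient
\[P(u) := -\frac{n}{n-2}\frac{1}{u} - \frac{q}{n-(n-1)q}\frac{\Delta u}{|\nabla u|^2}\]
of $\nabla u \otimes \nabla u$ in $\mathbf{E}$ depends only on $u$. Since $\mathbf{E}$ is trace-free by construction, $\mathbf{E} = 0$ takes precisely the shape
\[\nabla^2 u + P(u)\,\nabla u \otimes \nabla u - \tfrac{1}{n}\bigl(\Delta u + P(u)\,|\nabla u|^2\bigr)\, g = 0\]
required by Lemma \ref{h-pro}, with $h''(v)/h'(v) = P(v)$; such an $h$ exists by direct integration of a first-order linear ODE. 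Lemma \ref{h-pro} then yields $h(u) = c_1|x - x_0|^2 + c_2$, and inverting this relation while fixing the constants through the PDE and the asymptotic behavior dictated by the first integral should reproduce exactly the one-parameter family \eqref{solution}.

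The principal obstacle is the middle stage: upgrading pointwise algebraic information ($\mathbf{E} = 0$ and radial Ricci vanishing) to the global isometry $(M^n, g) \cong \mathbb{R}^n$. This is exactly where the low-dimensional restriction is felt, paralleling the technical core of the gradient-free second-critical rigidity theorems that the paper cites; by contrast, the ODE bookkeeping inside Lemma \ref{h-pro} and the final identification of $u$ with \eqref{solution} are essentially algebraic once the flat ambient space is in hand.
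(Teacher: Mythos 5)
Your first integral (vanishing of $\nabla l$, hence $l \equiv c_0$ and $|\nabla u|^{2-q}$ a function of $u$) and the observation that $\operatorname{Ric}(\nabla u,\nabla u) = 0$ both match the paper exactly. The final ODE bookkeeping on $\mathbb{R}^n$ via Lemma~\ref{h-pro} is also the paper's route. However, the middle stage — the rigidity $(M^n,g)\cong\mathbb{R}^n$ — is a genuine gap in your proposal: you describe it as "a geometric argument in the spirit of~\cite{CM2022,CFP2024}" (splitting, round level sets, Bishop--Gromov equality) without actually producing one, and you concede it is "the principal obstacle." The paper in fact has a concrete and shorter route that you did not find: it introduces the reparametrization $v = h(u)$ \emph{before} any rigidity is known, observing that $\mathbf{E} = 0$ is exactly equivalent to $\nabla^2 v = \tfrac{\Delta v}{n} g$ because the coefficient $P(u)$ (which in your notation is $h''(v)/h'(v)$) is a function of $u$ alone thanks to the first integral. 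Then, crucially, it extracts the \emph{full} directional Ricci vanishing $\operatorname{Ric}(\nabla u,\cdot)=0$ — not merely $\operatorname{Ric}(\nabla u,\nabla u)=0$ — from $\operatorname{div}\mathbf{E}=0$ via the divergence identity~\eqref{MEiji}. Taking the divergence of $\nabla^2 v = \tfrac{\Delta v}{n}g$ and feeding in $\operatorname{Ric}(\nabla v,\cdot)=0$ then forces $\nabla\Delta v = 0$, so $\nabla^2 v = \tfrac{c}{n}g$ with $c$ constant; combined with $\operatorname{Ric}(\nabla v,\nabla v)=0$, Tashiro's classification theorem~\cite[Theorem~2~(I,B)]{T1965} gives the isometry with $\mathbb{R}^n$ directly. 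Your missing ingredients are precisely (i) applying the change of variable $v=h(u)$ on the manifold rather than only after arriving at $\mathbb{R}^n$, (ii) the stronger conclusion $\operatorname{Ric}(\nabla u,\cdot)=0$ from $\operatorname{div}\mathbf{E}=0$, and (iii) Tashiro's theorem.

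One further factual correction: you write that the rigidity step "actively uses $n\in\{3,4,5\}$." It does not. Lemma~\ref{critical-thm-pf} holds for all $n\geqslant 3$; the dimensional restriction enters only in the proof of Theorem~\ref{thm-critical} itself, through the integral estimates of Lemma~\ref{prop-critical-est} that establish $\mathbf{E}\equiv 0$ in the first place. By the time one is inside this lemma, $\mathbf{E}=0$ is already hypothesized and the dimension plays no further role.
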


\begin{proof}
Noting that \(\nabla l=(2-q)u^{-2_*(q)}|\nabla u|^{-q}\mathbf E\cdot\nabla u=0\), it yields \(l = c_0 > 0\). Thus 
\begin{equation}\label{tidu u}
|\nabla u|^{2-q}=\big(c_0-\frac{2-q}{2^*(q)}u^{\frac{2-q}{(n-2)(1-q)}}\big)u^{2_*(q)}.
\end{equation}
Let \(h\) satisfy 
\begin{equation}\label{h-ass}
h'(u)=u^{-\frac{n}{n-2}}\big(c_0-\frac{2-q}{2^*(q)}u^{\frac{2-q}{(n-2)(1-q)}}\big)^{-\frac{q}{2-q}}.
\end{equation}
Based on \eqref{Meq-gradient} and \eqref{tidu u}, it holds
\[\frac{h''(u)}{h'(u)}=-\frac{n}{n-2}\frac 1 u-\frac{q}{n-(n-1)q}\frac{\Delta u}{|\nabla u|^2}.\]
Let \(v=h(u)\), it is obvious that \(v\) is a nontrivial function. Since \(\mathbf E=0\), from \eqref{cri-id} we immediately get \(\operatorname{Ric}(\nabla u,\nabla u)=0\), then \(\operatorname{Ric}(\nabla v,\nabla v)=0\). On the other hand,
\begin{align*}
\nabla^2 v-\frac{\Delta v}{n}g
=~&h''(u)\nabla u\otimes\nabla u+h'(u)\nabla^2 u-\frac 1 n (h''(u)|\nabla u|^2+h'(u)\Delta u)g\\
=~&h'(u)\big(\nabla^2 u+\frac{h''(u)}{h'(u)}\nabla u\otimes\nabla u-\frac{1}{n}(\Delta u+\frac{h''(u)}{h'(u)}|\nabla u|^2)g\big)\\
=~&h'(u)\mathbf E=0,
\end{align*}
which means 
\begin{equation}\label{M-con}
\nabla^2v=\frac{\Delta v}{n}g.
\end{equation}
Taking divergence of \eqref{M-con} yields \(\nabla\Delta v+\operatorname{Ric}(\nabla v,\cdot)=\frac{\nabla\Delta v}{n}\). Since \(\mathbf E=0\), then \(\operatorname{div}\mathbf E=0\), by \eqref{MEiji}, we have \(\operatorname{Ric}(\nabla v,\cdot)=0\). Thus \(\nabla\Delta v=0\). Therefore \(\nabla^2v=\frac{\Delta v}{n}g=\frac{c}{n}g\) for some constant \(c\). Combining \(\operatorname{Ric}(\nabla v,\nabla v)=0\), we have \((M^n, g)\) is isometric to \(\mathbb R^n\) by \cite[Theorem 2 (I, B)]{T1965}.

On the other hand, by the definition of \(h\) and Lemma \ref{h-pro}, we have
\[h(u)=c_1|x-x_0|^2+c_2.\] 
Thus \(h'(u)\nabla u=2c_1(x-x_0)\) for some \(c_1>0\). It follows from \eqref{h-ass} that 
\begin{equation}\label{tidu u2}
\nabla u=2c_1(x-x_0)u^{\frac{n}{n-2}}\big(c_0-\frac{2-q}{2^*(q)}u^{\frac{2-q}{(n-2)(1-q)}}\big)^{\frac{q}{2-q}}.
\end{equation}
By \eqref{tidu u} and \eqref{tidu u2}, it yields
\[u^{-\frac{2-q}{n-2}}=(2c_1)^{2-q}|x-x_0|^{2-q}\big(c_0-\frac{2-q}{2^*(q)}u^{\frac{2-q}{(n-2)(1-q)}}\big)^{q-1}.\] 
Rearranging the equation, that is,
\[\big(1+(2c_1)^{\frac{2-q}{q-1}}\frac{2-q}{2^*(q)}|x-x_0|^{\frac{2-q}{q-1}}\big)u^{\frac{2-q}{(n-2)(1-q)}}=(2c_1)^{\frac{2-q}{q-1}}|x-x_0|^{\frac{2-q}{q-1}}c_0.\]
Therefore, \(u=\big(\frac{\frac{2^*(q)}{2-q}c_0}{1+(2c_1)^{\frac{2-q}{1-q}}\frac{2^*(q)}{2-q}|x-x_0|^{\frac{2-q}{1-q}}}\big)^{\frac{(n-2)(1-q)}{2-q}}\). Choosing \(c_1=\big(\frac{2-q}{2^*(q)}\big)^{\frac{1-q}{2-q}}\frac\lambda 2\), the constant \(c_0\) can be solved from equation \eqref{Meq-gradient}. After which, \(u\) must be of the form \eqref{solution}.
\end{proof}

An integral estimate is required for further analysis.

\begin{lemma}\label{prop-critical-est}
Let \((s,t)\in \mathbb{R}\times \mathbb R\) satisfies
\begin{align*}
\begin{cases}
-t\leqslant s<[2^*(q)-1]\frac{t}{q},&0<t\leqslant q,\\
-t\leqslant s<\frac{2-t}{n-2}\Big(n+\frac{q}{1-q}\Big)-1,&q<t\leqslant2.
\end{cases}
\end{align*}
Then \[\int_{B_R}u^s|\nabla u|^t\leqslant CR^{n-\frac{1-q}{2-q}(n-2)s-\frac{(1-q)n+q}{2-q}t}.\] 
\end{lemma}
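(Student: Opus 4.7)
The right-hand exponent $n-\frac{(1-q)(n-2)s+(n-(n-1)q)t}{2-q}$ in the lemma is precisely the natural scaling exponent of $u^s|\nabla u|^t$ along the explicit family~\eqref{solution}: since $u\sim|x|^{-(n-2)(1-q)/(2-q)}$ and $|\nabla u|\sim|x|^{-(n-(n-1)q)/(2-q)}$ on $U_{\lambda,x_0}$, the integral $\int_{B_R}u^s|\nabla u|^t$ on this family scales exactly as $R^{\sigma(s,t)}$, where I set $\sigma(s,t):=n-\frac{(1-q)(n-2)s+(n-(n-1)q)t}{2-q}$. Any argument must therefore respect this scaling, and the natural plan is to test the critical equation $-\Delta u=u^{2^*(q)-1}|\nabla u|^q$ against weighted cutoffs, integrate by parts, and close the estimate with Young's inequality together with the Bishop--Gromov bound $|B_R|\leqslant CR^n$.

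First I would establish the base case $s=-t$, that is, $\int_{B_R}u^{-t}|\nabla u|^t\leqslant CR^{n-t}$ for $0<t\leqslant 2$. Testing the equation against $u^{-1}\eta^\gamma$ and integrating by parts yields
\[\int u^{-2}|\nabla u|^2\eta^\gamma+\int u^{2^*(q)-2}|\nabla u|^q\eta^\gamma=\gamma\int u^{-1}\eta^{\gamma-1}\nabla u\cdot\nabla\eta.\]
The second term on the left is non-negative and can be discarded; Young's inequality on the right absorbs a portion of $\int u^{-2}|\nabla u|^2\eta^\gamma$ into the left side, and Bishop--Gromov gives $\int_{B_R}u^{-2}|\nabla u|^2\leqslant CR^{n-2}$. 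Hölder's inequality against the volume factor $|B_R|\leqslant CR^n$ then interpolates to $\int_{B_R}u^{-t}|\nabla u|^t\leqslant CR^{n-t}=CR^{\sigma(-t,t)}$ for every $0<t\leqslant 2$.

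Next I would establish the estimate at the upper endpoint of $s$ in each $t$-regime. Testing the equation against $u^{s+1}\eta^\gamma$ and integrating by parts yields
\[\int u^{s+2^*(q)}|\nabla u|^q\eta^\gamma+(s+1)\int u^s|\nabla u|^2\eta^\gamma=-\gamma\int u^{s+1}\eta^{\gamma-1}\nabla u\cdot\nabla\eta,\]
which couples the targets $(s,2)$ and $(s+2^*(q),q)$ up to a boundary term of order $R^{-1}$. For $0<t\leqslant q$, further testing with $u^\alpha|\nabla u|^{t-q}\eta^\gamma$ (regularising $|\nabla u|^{t-q}$ by $(|\nabla u|^2+\varepsilon)^{(t-q)/2}$ and passing $\varepsilon\to 0^+$ to avoid the zero set $Z=\{|\nabla u|=0\}$), and for $q<t\leqslant 2$ with $u^\alpha|\nabla u|^{t-2}\eta^\gamma$, produces analogous identities; tuning $\alpha$ so that the dominant interior term is exactly $\int u^s|\nabla u|^t\eta^\gamma$ allows Young's inequality to close the estimate with the correct $R$-power. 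Intermediate $(s,t)$ then follow by Hölder's inequality, which makes $(s,t)\mapsto\log I(s,t)$ convex; since $\sigma$ is affine, $I(s,t)\leqslant CR^{\sigma(s,t)}$ holds on the convex hull of the endpoints, covering the full admissible range.

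The central obstacle is the exponent bookkeeping in the Young's inequality step: each absorption must leave a strictly positive coefficient in front of the good interior term, and the residual boundary contribution must carry a power of $R$ no larger than $\sigma(s,t)$. The stated upper bounds $s<(2^*(q)-1)t/q$ (for $0<t\leqslant q$) and $s<\frac{2-t}{n-2}(n+q/(1-q))-1$ (for $q<t\leqslant 2$) are exactly the thresholds at which the absorbing coefficient would degenerate, reflecting the sharpness of the estimate and aligning with the fact that the integral diverges precisely at these endpoints along the critical family~\eqref{solution}. A secondary technical point is the handling of $Z$, resolved by the $\varepsilon$-regularisation described above.
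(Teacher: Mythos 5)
The overall skeleton of your proposal (test the PDE against powers of $u$ with cutoffs, apply Young's inequality, invoke Bishop--Gromov, then H\"older-interpolate on the convex hull) is the right one and matches the paper's strategy. However, there is a genuine gap in the central step, and you have missed the simple mechanism that the paper uses.

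The gap is in the second stage, where you propose testing the equation against $u^{\alpha}|\nabla u|^{t-q}\eta^{\gamma}$ (respectively $u^{\alpha}|\nabla u|^{t-2}\eta^{\gamma}$). Integrating $\int\nabla u\cdot\nabla\bigl(u^{\alpha}|\nabla u|^{t-q}\eta^{\gamma}\bigr)$ by parts produces, besides the manageable terms, the Hessian term
\[(t-q)\int u^{\alpha}|\nabla u|^{t-q-2}\langle\nabla^{2}u\,\nabla u,\nabla u\rangle\,\eta^{\gamma},\]
which has no controlled sign and cannot be absorbed by Young's inequality against the quantities you have at hand (you have no a priori bound on $\nabla^{2}u$ here). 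The $\varepsilon$-regularisation you describe only fixes the singularity of $|\nabla u|^{t-q}$ on $Z$; it does nothing about this Hessian contribution. So ``tuning $\alpha$ so that the dominant interior term is exactly $\int u^{s}|\nabla u|^{t}\eta^{\gamma}$'' does not close the estimate. There is also a sign slip in your displayed identity for the test function $u^{s+1}\eta^{\gamma}$: the correct form is $\int u^{s+2^{*}(q)}|\nabla u|^{q}\eta^{\gamma}-(s+1)\int u^{s}|\nabla u|^{2}\eta^{\gamma}=\gamma\int u^{s+1}\eta^{\gamma-1}\nabla u\cdot\nabla\eta$, which is only useful (two nonnegative terms on the left) when $s+1<0$, i.e.\ $s\in(-2,-1)$; for other $s$ this identity alone does not give the estimate.

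What the paper does instead, and what you should do, is use \emph{only} the test function $u^{\beta_{1}}\eta^{\gamma}$ with $-1<\beta_{1}<0$, and then \emph{keep both} left-hand terms. A single such test gives simultaneously
\[\int_{B_{R}}u^{\beta_{1}+2^{*}(q)-1}|\nabla u|^{q}\leqslant CR^{\frac{(n-2)[1-(1-q)\beta_{1}]}{2-q}}\quad\text{and}\quad\int_{B_{R}}u^{\beta_{1}-1}|\nabla u|^{2}\leqslant CR^{\frac{(n-2)[1-(1-q)\beta_{1}]}{2-q}},\]
i.e.\ estimates at the two parameter points $(\beta_{1}+2^{*}(q)-1,q)$ and $(\beta_{1}-1,2)$, without ever inserting a gradient power into the test function. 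Notice that in your first step you threw away the term $\int u^{2^{*}(q)-2}|\nabla u|^{q}$ as ``non-negative and can be discarded''; this is precisely the term one must retain. Sweeping $\beta_{1}$ over $(-1,0)$ and H\"older-interpolating against $|B_{R}|\leqslant CR^{n}$ then covers the two wedges $S_{1}=\{-t\leqslant s<-t/2,\ 0<t\leqslant2\}$ and $S_{2}=\{[2^{*}(q)-2]t/q\leqslant s<[2^{*}(q)-1]t/q,\ 0<t\leqslant q\}$, and a further H\"older interpolation on the convex hull of $S_{1}\cup S_{2}$ gives the whole admissible range. Your convex-hull idea at the end is correct; you should apply it to these two wedges rather than to ``endpoint'' estimates that your Hessian step cannot produce.
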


\begin{proof} Let \(-1<\beta_1<0\) and sufficiently large \(\gamma>0\) be independent of \(R\). 
Testing equation \eqref{Meq-gradient} with \(u^{\beta_1}\eta^\gamma\) and applying Young’s inequality yields
\begin{align}\label{ineq-critical-3}
&\int_{M^n} u^{\beta_1+2^*(q)-1}|\nabla u|^q\eta^\gamma-\beta_1\int_{M^n}u^{\beta_1-1}|\nabla u|^2\eta^\gamma=\gamma\int_{M^n}u^{\beta_1}\eta^{\gamma-1}\nabla u\cdot\nabla\eta\nonumber\\
\leqslant~&\frac{1}{2}\int_{M^n} u^{\beta_1+2^*(q)-1}|\nabla u|^q\eta^\gamma-\frac{\beta_1}{2}\int_{M^n}u^{\beta_1-1}|\nabla u|^2\eta^\gamma+C\int_{M^n}\eta^{\gamma-\frac{1}{c_3}}|\nabla\eta|^{\frac{1}{c_3}},
\end{align}
where \(c_3=\frac{2-q}{[n+(n-2)\beta_1](1-q)+2}\). Taking \(\gamma=\frac{1}{c_3}\), then using the Bishop-Gromov volume comparison theorem, it follows from \eqref{ineq-critical-3} that
\begin{equation}\label{ineq-critical-4}
\int_{B_R}u^{\beta_1+2^*(q)-1}|\nabla u|^q+\int_{B_R}u^{\beta_1-1}|\nabla u|^2\leqslant CR^{\frac{(n-2)[1-(1-q)\beta_1]}{2-q}}.
\end{equation}
Let \((s_1,t_1)\in S_1:=\big\{-t_1\leqslant s_1<-\frac{t_1}{2},~0<t_1\leqslant2\big\}\), and choose \(\beta_1=\frac{2s_1}{t_1}+1\). According to the Bishop-Gromov volume comparison theorem, by H\"{o}lder inequality and \eqref{ineq-critical-4} it holds 
\begin{align}\label{ineq-critical-5}
\int_{M^n}u^{s_1}|\nabla u|^{t_1}\eta^\gamma
\leq~&\big(\int_{M^n}u^{\frac{2s_1}{t_1}}|\nabla u|^2\eta^\gamma\big)^{\frac{t_1}{2}} \big(\int_{M^n}\eta^\gamma\big)^{\frac{2-t_1}{2}}\nonumber\\
\leq~& CR^{n-\frac{1-q}{2-q}(n-2)s_1-\frac{(1-q)n+q}{2-q}t_1}.    
\end{align}
 
Similarly, for
\[(s_2,t_2)\in S_2:=\big\{[2^*(q)-2]\frac{t_2}{q}\leqslant s_2<[2^*(q)-1]\frac{t_2}{q},~0<t_2\leqslant q\big\},\]
taking \(\beta_1=\frac{qs_2}{t_2}+1-2^*(q)\), the Bishop-Gromov volume comparison theorem yields
\begin{align}\label{ineq-critical-6}
\int_{M^n}u^{s_2}|\nabla u|^{t_2}\eta^\gamma
\leq~&\big(\int_{M^n}u^{\frac{qs_2}{t_2}}|\nabla u|^q\eta^\gamma\big)^{\frac{t_2}{q}}\big(\int_{M^n}\eta^\gamma\big)^{\frac{q-t_2}{q}}\nonumber\\
\leq~&CR^{n-\frac{1-q}{2-q}(n-2)s_2-\frac{(1-q)n+q}{2-q}t_2}.
\end{align}
Now, let \((s,t)\) belong to the convex hull of \(S_1\cup S_2\), By H\"older's inequality, it follows from \eqref{ineq-critical-5} and \eqref{ineq-critical-6} that
\[\int_{B_R}u^s|\nabla u|^t\leqslant CR^{n-\frac{1-q}{2-q}(n-2)s-\frac{(1-q)n+q}{2-q}t}.\]
This completes the proof.
\end{proof}

Now, we are ready to finish the proof of Theorem \ref{thm-critical} based on the above results.

\textbf{Proof of Theorem \ref{thm-critical}.}
Testing \eqref{ineq-critical-0} by \(\eta^2\) yields
\begin{align}\label{ineq-critical-1}
&\int_{Z^c} u^{\beta'}|\nabla u|^{(n-1)q^2-(n+1)q}l^{-\mu}|\mathbf E|^2\eta^2\nonumber\\
\leqslant~&C\int_{\partial Z \cap B_{2R}}u^{\beta'}|\nabla u|^{(n-1)q^2-(n+1)q}l^{-\mu}|\mathbf E\cdot\nabla u\otimes\nu|\eta^2\nonumber\\   
~&+C\int_{Z^c\cap B_{2R}}u^{\beta'}|\nabla u|^{(n-1)q^2-(n+1)q}l^{-\mu}|\mathbf E\cdot\nabla u\otimes\eta|.
\end{align}
    
Following the discussion of inequality \eqref{ineq-sub-1} in Section \ref{sec-subcritical}, which corresponds to the case where \(d=(n-1)q^2-(n+1)q\), we can show that the first term on the RHS of inequality \eqref{ineq-critical-1} vanishes. 

Next, we estimate the second term on the RHS of \eqref{ineq-critical-1} using the mean value inequality
\begin{align*}
&\int_{Z^c\cap B_{2R}}u^{\beta'}|\nabla u|^{(n-1)q^2-(n+1)q}l^{-\mu}|\mathbf E\cdot\nabla u\otimes\eta|\eta\\
\leqslant~&\int_{Z^c\cap B_{2R}} u^{\beta'}|\nabla u|^{(n-1)q^2-(n+1)q}l^{-\mu}\big(\varepsilon|\mathbf E|^2\eta^2+C|\nabla u|^2|\nabla \eta|^2\big).
\end{align*}
Substituting this into \eqref{ineq-critical-1} and using \(l\geqslant Cu^{\frac{2-q}{(n-2)(1-q)}}\), it yields
\begin{align}\label{ineq-critical-2}
\int_{Z^c\cap B_R} u^{\beta'}|\nabla u|^{(n-1)q^2-(n+1)q}l^{-\mu}|\mathbf E|^2
\leqslant~&CR^{-2}\int_{Z^c\cap B_{2R}} u^{\beta'}|\nabla u|^{(n-1)q^2-(n+1)q+2}l^{-\mu}\nonumber\\
\leqslant~&CR^{-2}\int_{Z^c\cap B_{2R}} u^s|\nabla u|^t,   
\end{align}
where \(s=\beta'-\frac{2-q}{(n-2)(1-q)}\mu\), \(t=(n-1)q^2-(n+1)q+2\).
    
Obviously, \(t-q=(n-1)q^2-(n+2)q+2>\frac{n-3}{n-1}\geqslant 0\) holds. We divide dimension \(n\) into two cases.
    
\textbf{Case 1. \(n=3\) or \(4\).} Take \(\mu=0\). It is easy to verify
\begin{align*}
&s+t=\frac{(2-q)[(n-1)q+n-3]}{n-2}\geqslant0,\\
&\frac{2-t}{n-2}\big(n+\frac{q}{1-q}\big)-1-s=\frac{1}{n-2}\big(\frac{2}{1-q}+1-(n-1)(1-q)\big)>\frac{4-n}{n-2}\geqslant0,\\
&n-\frac{1-q}{2-q}(n-2)s-\frac{(1-q)n+q}{2-q}t=(n-1)q+1.
\end{align*}
From Lemma \ref{prop-critical-est} and the Bishop-Gromov volume comparison theorem, inequality \eqref{ineq-critical-2} becomes
\[\int_{Z^c\cap B_R} u^{\beta'}|\nabla u|^{(n-1)q^2-(n+1)q}l^{-\mu}|\mathbf E|^2\leqslant CR^{(n-1)q-1}.\]

\textbf{Case 2. \(n=5\).} In this case, \(0<q<\frac 1 4\). Choosing \(\mu=\frac{5-4q}{2-q}(\frac 1 4-q)-\varepsilon\), we obtain
\begin{align*}
&\lim_{\varepsilon\to0^+}(s+t)=\lim_{\varepsilon\to0^+}\frac{2-q}{3}(4q+2-\frac{\mu}{1-q})=\frac{16q^3-56q^2+32q+11}{12(1-q)}>0,\\
&\lim_{\varepsilon\to0^+}\big(\frac{2-t}{n-2}(n+\frac{q}{1-q})-1-s\big)=\lim_{\varepsilon\to0^+}\frac{(2-q)\mu-4q^2+7q-1}{3(1-q)}=\frac{4q+1}{12(1-q)}>0,\\
&n-\frac{1-q}{2-q}(n-2)s-\frac{(1-q)n+q}{2-q}t=\mu+4q+1=2-\frac{3(1-4q)}{4(2-q)}-\varepsilon.
\end{align*}
Using Lemma \ref{prop-critical-est} and the Bishop-Gromov volume comparison theorem again, inequality \eqref{ineq-critical-2} becomes
\[\int_{Z^c\cap B_R} u^{\beta'}|\nabla u|^{(n-1)q^2-(n+1)q}l^{-\mu}|\mathbf E|^2\leqslant CR^{-\frac{3(1-4q)}{4(2-q)}-\varepsilon}.\]

In both cases, \(\mathbf E=0\) holds in \(Z^c\) by letting \(R\to\infty\). For \(x\in\mathring Z\), \(\nabla u(x)=0\) yields \(\mathbf E=0\). By the continuity of \(\mathbf E\), we immediately conclude that \(\mathbf E=0\) on \(M\). Together with Lemma \ref{critical-thm-pf}, the proof is complete. 
\qed

\textbf{Acknowledgments.} The authors would like to thank Professor Xi-Nan Ma for the constant encouragement in this paper. Jingbo Dou is supported by the National Natural Science Foundation of China (Grant No. 12471109), Youth Innovation Team of Shaanxi Universities and the Fundamental Research Funds for the Central Universities (Grant No. GK202307001, GK202402004), Tian Wu is supported by Anhui Postdoctoral Scientific Research Program Foundation (Grant No. 2025B1055) and the Fundamental Research Funds for the Central Universities (Grant No. WK0010250106). Hua Zhu is supported by the National Natural Science Foundation of China (Grant No. 12501273) and the Natural Science Foundation of Southwest University of Science and Technology (Grant No. 25zx7153)


\footnotesize{
    Welcome contact us:
    \begin{itemize}
        \item Jingbo Dou, School of Mathematics and Statistics, Shaanxi Normal University, Xi'an, Shaanxi, 710119, People's Republic of China. Email: \emph{jbdou@snnu.edu.cn}
        \item Benfeng Shi, School of Mathematics and Statistics, Shaanxi Normal University, Xi'an, Shaanxi, 710119, People's Republic of China. Email: \emph{shibenfeng@snnu.edu.cn}
        \item Tian Wu, School of Mathematical Sciences, University of Science and Technology of China, Hefei, Anhui, 230026, People's Republic of China. Email: \emph{wt1997@ustc.edu.cn}
        \item Hua Zhu, School of Mathematical and Physics, Southwest University of Science and Technology, Mianyang, Sichuan, 621010, People's Republic of China. Email: \emph{zhuhmaths@mail.ustc.edu.cn}
    \end{itemize}
}


\begin{thebibliography}{99}\small
\bibitem{BV2021} M. F. Bidaut-V\'{e}ron, Liouville results and asymptotics of solutions of a quasilinear elliptic equation with supercritical source gradient term, Adv. Nonlinear Stud., 21 (2021), 57-76.
\bibitem{BBCP2000} G. I. Barenblatt, M. Bertsch, A. E. Chertock, V. M. Prostokishin, Self-similar intermediate
asymptotics for a degenerate parabolic filtration-absorption equation. Proc. Natl. Acad. Sci., 97 (2000), 9844-9848.
\bibitem{BP-GM-Q2016}  M. \'{A}. Burgos-P\'{e}rez, J. Garc\'{ı}a-Meli\'{a}n, A. Quaas, Classification of supersolutions and Liouville theorems for some nonlinear elliptic problems, Discrete Contin. Dyn. Syst., 36 (2016), 4703–4721.
\bibitem{BV-GH-V2014} M. F. Bidaut-V\'{e}ron, M. Garc\'{i}a-Huidobro, L. V\'{e}ron, Local and global properties of solutions of quasilinear Hamilton-Jacobi equations, J. Funct. Anal., 267(2014), 3294–3331.
\bibitem{BV-GH-V2019} M. F. Bidaut-V\'{e}ron, M. Garc\'{i}a-Huidobro, L. V\'{e}ron, Estimates of solutions of elliptic equations with a source reaction term involving the product of the function and its gradient, Duke Math. J., 168 (2019), 1487-1537.
\bibitem{BV-V1991} M. F. Bidaut-V\'{e}ron, L. V\'{e}ron, Nonlinear elliptic equations on compact Riemannian manifolds and asymptotics of Emden equations, Invent. Math., 106 (1991), 489-539.
\bibitem{CY1975} S. Y. Cheng, S. T. Yau, Differential equations on Riemannian manifolds and their geometric applications, Comm. Pure Appl. Math., 28 (1975), 333–354.
\bibitem{CFP2024} G. Ciraolo, A. Farina, C.C. Polvara, Classification results, rigidity theorems and semilinear PDEs on Riemannian manifolds: a \(P\)-function approach, J. Eur. Math. Soc., 2025.
\bibitem{CGS1989} L. A. Caffarelli, B. Gidas, J. Spruck, Asymptotic symmetry and local behavior of semilinear elliptic equations with critical Sobolev growth, Comm. Pure Appl. Math., 42 (1989), 271-297.
\bibitem{CHZ2022} C. Chang, B. Hu, Z. Zhang, Liouville-type theorems and existence of solutions for quasilinear elliptic equations with nonlinear gradient terms, Nonlinear Anal., 220 (2022), 112873.
\bibitem{CL1991} W. Chen, C. Li, Classification of solutions of some nonlinear elliptic equations, Duke Math. J., 63 (1991), 615-622.
\bibitem{CM1997} G. Caristi, E. Mitidieri, Nonexistence of positive solutions of quasilinear equations, Adv. Differential Equations., 2 (1997), 319–359.
\bibitem{CM2022} G. Catino, D. D. Monticelli. Semilinear elliptic equations on manifolds with nonnegative Ricci curvature. J. Eur. Math. Soc., 2024
\bibitem{CMR2023} G. Catino, D. D. Monticelli, A. Roncoroni, On the critical \(p\)-Laplace equation, Adv. Math., 433 (2023), 109331.
\bibitem{F2009} R. Filippucci, Nonexistence of positive weak solutions of elliptic inequalities, Nonlinear Anal., 70 (2009), 2903-2916.
\bibitem{FMM2023} M. Fogagnolo, A. Malchiodi, L. Mazzieri, A note on the critical Laplace Equation and Ricci curvature, J. Geom. Anal., 33 (2023) 1–17.
\bibitem{FPS2020} R. Filippucci, P. Pucci, P. Souplet, A Liouville-type theorem for an elliptic equation with superquadratic growth in the gradient, Adv. Nonlinear Stud., 20 (2020), 245-251.
\bibitem{GNN1981}B. Gidas, W. M. Ni, L. Nirenberg, Symmetry of positive solutions of nonlinear elliptic equations in \(\mathbb R^n\),  Math. Anal. Appl., 7A (1981), 369-402.
\bibitem{GS1981} B. Gidas, J. Spruck, Global and local behavior of positive solutions of nonlinear elliptic equations, Comm. Pure Appl. Math., 34 (1981), 525-598.
\bibitem{GS2014} A. Grigor'yan, Y. Sun, On nonnegative solutions of the inequality \(\Delta u+u^\sigma\leqslant0\) on Riemannian manifolds, Comm. Pure Appl. Math., 67 (2014), 1336–1352.
\bibitem{GZ2025} C. Guo, Z. Zhang, A new proof of Liouville type theorems for a class of semilinear elliptic equations, arXiv:2507.07664.
\bibitem{HHW2023} J. He, J. Hu, Y. Wang, Nash-Moser iteration approach to the logarithmic gradient estimates and Liouville properties of quasilinear elliptic equations on manifolds, arXiv:2311.02568.
\bibitem{JL1988} D. Jerison, J. M. Lee, Extremals for the Sobolev inequality on the Heisenberg group and the CR Yamabe problem, J. Amer. Math. Soc., 1 (1988), 1-13.
\bibitem{L1985} P. L. Lions, Quelques remarques sur les probl\`{e}mes elliptiques quasilin\'{e}aires du second ordre, J. Analyse Math., 45 (1985), 234-254.
\bibitem{L2025} Z. Lu, Some remarks for semilinear elliptic equations on Riemannian manifolds with nonnegative curvature, Ann. Mat. Pura. Appl., 1923 (2025), 1-14.
\bibitem{L2025-1} Z. Lu, Liouville theorems and Harnack inequality for a class of semilinear elliptic equations, J. Differential Equations., 426 (2025), 454-465.
\bibitem{LWY2025} W. Liang, T. Wu, J. Yan, Liouville theorems for anisotropic \(p\)-Laplace equations with a semilinear term, arXiv:2507.20182.
\bibitem{LZ2003} Y. Li, L. Zhang, Liouville-type theorems and Harnack-type inequalities for semilinear elliptic equations, J. Anal. Math., 90 (2003), 27–87.
\bibitem{MOW2023} X. Ma, Q. Ou, T. Wu, Jerison-Lee identity and Semi-linear subelliptic equation on CR manifold, arXiv:2311.16428v2.
\bibitem{MOW2025} X. Ma, Q. Ou, T. Wu, Jerison-Lee identities and semi-linear subelliptic equations on Heisenberg group, Acta Math. Sci. Ser. B (Engl. Ed.)., 45 (2025), 264-279.
\bibitem{MP2001} E. Mitidieri, S. I. Pokhozhaev, A priori estimates and the absence of solutions of nonlinear partial differential equations and inequalities, Proc. Steklov Inst. Math., 234 (2001), 1–362.
\bibitem{MW2024} X. Ma, T. Wu, The application of the invariant tensor technique in the classification of solutions to semilinear elliptic and sub-elliptic partial differential equations (in Chinese), Sci Sin Math., 2024, 1627-1648.
\bibitem{MW2023} X. Ma, W. Wu, Liouville theorem for elliptic equations with a source reaction term involving the product of the function and its gradient in \(\mathbb{R}^n\), Bull. Sci. Math., (206) 2026, 103747.
\bibitem{O1971} M. Obata, The conjecture on conformal transformations of Riemannian manifolds, J. Differential Geometry., 6 (1971), 247-258.
\bibitem{O2025} Q. Ou, On the classification of entire solutions to the critical \(p\)-Laplace equation, Math. Ann., 392 (2025), 1711-1729.
\bibitem{P2018} I. Peral, On Some Elliptic and Parabolic Equations Related to Growth Models. Partial Differential Equations and Geometric Measure Theory, Springer, Cham, 2018, 43-195.
\bibitem{P2006} P. Petersen, Riemannian Geometry, Second edition. Graduate Texts in Mathematics, Springer, New York, 171, 2006.
\bibitem{SW2025} L. Sun, Y. Wang, Critical quasilinear equations on Riemannian manifolds, arXiv:2502.08495.
\bibitem{SXX2022} Y. Sun, J. Xiao and F. Xu, A sharp Liouville principle for \(\Delta_m u+u^p|\nabla u|^q\leqslant0\) on geodesically complete noncompact Riemannian manifolds, Math. Ann., 384 (2022), 1309–1341.
\bibitem{SZ2002} J. Serrin, H. Zou, Cauchy-Liouville and universal boundedness theorems for quasilinear elliptic equations and inequalities, Acta Math., 189 (2002), 79-142.
\bibitem{T1965} Y. Tashiro, Complete Riemannian manifolds and some vector fields, Trans. Amer. Math. Soc., 117 (1965), 251–275.
\bibitem{W2022} X. Wang, Uniqueness results on a geometric PDE in Riemannian and CR geometry revisited, Math. Z., 301 (2022), 1299-1314.
\bibitem{WZ} T. Wu, H. Zhu, On a type of \(m\)-Laplace equations with a gradient term, preprint.
\bibitem{Z2025} B. Zhao, Liouville theorem for the inequality $\Delta_m u+ f (u)\leqslant0$ on Riemannian manifolds. arxiv:2509.16659.
\bibitem{Z2024} H. Zhu, Research on several problems of nonlinear partial differential equations, Ph.D. Thesis, University of Science and Technology of China, 2024.
\end{thebibliography}
\end{document}